\documentclass[12pt]{amsart}
\usepackage{fullpage}
\usepackage[title]{appendix}
\usepackage{graphicx}
\usepackage{subcaption}
\usepackage{bm} 
\usepackage{amsthm}
\usepackage{mathrsfs} %%pour le style calligraphie en mathscr
\usepackage{amsfonts}
\usepackage{amssymb}
\usepackage{diagbox}
\usepackage{mathtools}
\usepackage{longtable} %tables with pagebreak
\usepackage{MnSymbol}
\usepackage{indentfirst}
\usepackage{listings,xcolor}
\usepackage[all]{xy}
\usepackage[colorlinks=true,linkcolor=blue]{hyperref}
\usepackage{ytableau}
\usepackage{tikz-cd}
\usepackage{enumerate}
\theoremstyle{plain}
\newtheorem{theorem}{Theorem}[section]
\newtheorem{thm}{Theorem}

\newtheorem{lemma}[theorem]{Lemma}
\newtheorem{corollary}{Corollary}[section]
\newtheorem{proposition}[theorem]{Proposition}

\theoremstyle{remark}

\newtheorem{remark}{Remark}[section]

\makeatletter

\newcommand{\Rmnum}[1]{\expandafter\@slowromancap\romannumeral #1@}
\makeatother

\def\ri{{\rm i}}

\def\li{{\rm Li}}

\def\bq{\mathbb Q}
\def\br{\mathbb R}
\def\bn{\mathbb N}
\def\bz{\mathbb Z}
\def\bc{{\mathbb C}}

\newmuskip\pFqmuskip

\numberwithin{equation}{section}
\allowdisplaybreaks[2] %allow page breaks in align enviroment

\begin{document}

\title[Modular transformations of bilateral $q$-series]{Theta functions and transformations of bilateral basic hypergeometric series}
\author{Nian Hong Zhou}

\address{N. H. Zhou: School of Mathematics and Statistics, The Center for Applied Mathematics of Guangxi, Guangxi Normal University, Guilin 541004, Guangxi, PR China}
\email{nianhongzhou@outlook.com; nianhongzhou@gxnu.edu.cn}%

\thanks{This paper was partially supported by the National Natural Science Foundation of China (No. 12301423).}%
\subjclass{Primary 11B65; Secondary 33D15, 11F27, 05A30}%
\keywords{Transformation of $q$-series; Theta functions; Asymptotic expansions}%

\begin{abstract}
We establish new transformation formulas involving theta functions and certain bilateral basic hypergeometric series. From these formulas, we construct companion $q$-series for a class of $q$-series such that the asymptotic expansion of their quotient admits a simple closed form. This allows us to prove several conjectures of McIntosh on asymptotic transformations of $q$-series. Moreover, our results extend some identities of Ramanujan and McIntosh.  
\end{abstract}
\maketitle

\section{Introduction}\label{sec1}
\subsection{Background}
Throughout this paper, we assume that $q\in\bc$ and $0<|q|<1$.
For any indeterminate $a$ and any real number $c$, the
\textit{$q$-shifted factorial} is defined by
\[
(a)_\infty:=(a;q)_\infty:=\prod_{j\ge 0}(1-aq^j),
\quad\text{and}\quad
(a)_c:=\frac{(a)_\infty}{(aq^c)_\infty}.
\]
We use the compact notation
\[
(a_1,\ldots,a_m)_c:=\prod_{j=1}^m(a_j)_c
\]
for products of $q$-shifted factorials, where $m$ is a positive integer
and $c\in\br\cup\{\infty\}$.

%The well--known Euler's identity (see \cite{euler2007various} and \cite[p.19]{MR557013} for example ) state that
%\begin{align}\label{eqm0}
%1+\sum_{n\ge 1}\frac{z^n q^{\frac{n(n-1)}{2}}}{(1-q)(1-q^2)\cdots(1-q^n)}=\prod_{n\ge 0}(1+zq^n).
%\end{align}
%Letting $z=q$ in \eqref{eqm0} we obtain a strict partition generating function identity
%\begin{align}\label{eqm1}
%\prod_{n\ge 1}(1+q^n)=1+\sum_{n\ge 1}\frac{q^{\frac{n(n+1)}{2}}}{(1-q)(1-q^2)\cdots(1-q^n)}.
%\end{align}
\medskip

One of Euler's well-known identities (cf.\ \cite[p.19, Eq. (2.2.6)]{MR557013}) states that
\begin{align}\label{eqm1}
(-z)_\infty=\sum_{n\ge 0}\frac{z^n q^{\binom{n}{2}}}{(q)_n},
\end{align}
for any $z\in\bc$. Following Ramanujan's last letter to Hardy
(cf.\ Watson \cite{MR1573993}), we call sums such as the one in
\eqref{eqm1} \emph{Eulerian forms}. In the same letter, Ramanujan listed
$17$ examples of functions in Eulerian form, which he called
\emph{mock theta functions}. The first three pages, in which Ramanujan
explained what he meant by a ``mock theta function,'' are very obscure.
Hardy comments that a mock theta function is a function defined by a
$q$-series convergent when $|q|<1$, for which one can calculate asymptotic
formulae, as $q$ tends to a ``rational point'' $e^{2\pi i s/r}$ of the unit
circle, with the same degree of precision as those furnished for ordinary
theta functions by the theory of linear transformations
(cf.\ Watson \cite[p.57]{MR1573993}). Ramanujan also noted that, for other
$q$-series in Eulerian form, approximations analogous to those for mock
theta functions may not exist. For instance, he stated that if
$q=e^{-t}$ with $t\to 0^+$, that is, if $q$ tends to $1$, then for all
integers $m\ge 2$ we have
\[
\sum_{n\ge 0}\frac{q^{\binom{n+1}{2}}}{(q)_n^2}
=
\sqrt{\frac{t}{2\pi\sqrt{5}}}
\exp\left(
\frac{\pi^2}{5t}
+
\frac{t}{8\sqrt{5}}
+
\sum_{k=2}^m c_k t^k
+
O(t^{m+1})
\right),
\]
where infinitely many coefficients $c_k$ are nonzero. Ramanujan said that
in this case ``the exponential factor does not close,'' but a rigorous proof
has not yet been found.

\medskip
In \cite[p. 60]{MR1573993}, Watson proved that
\[
\sum_{n\ge 0}\frac{q^{\binom{n+1}{2}}}{(q)_n^2}
=
\frac{1}{(q)_\infty}
\sum_{n\ge 0}\frac{q^{2n^2+n}}{(q^2;q^2)_n}.
\]
By Euler's pentagonal number theorem, $(q)_\infty$ can be expressed as a
theta series, and it satisfies the following transformation formula
(cf.\ \cite[Equation (2)]{MR1310726}):
\begin{equation}\label{eqdedk}  
(e^{-t}; e^{-t})_\infty
=
(2\pi/t)^{1/2}
e^{\frac{t}{24}-\frac{\pi^2}{6t}}
\left(e^{-4\pi^2/t}; e^{-4\pi^2/t}\right)_\infty, 
\end{equation}  
where ${\rm Re}(t)>0$. Thus, to prove Ramanujan's claim, it remains to study the asymptotic behavior of the series on the right-hand side of Watson's identity. Let the \emph{dilogarithm function}
$\operatorname{Li}_2(z)$ be defined by
\[
\operatorname{Li}_2(z)=\sum_{n\ge 1}\frac{z^n}{n^2},
\]
for $|z|\le 1$. McIntosh \cite{MR1310726}
(see also \cite[Theorem 1]{MR1618298}) established the following theorem.

\begin{thm}\label{macmain}
Let $a,b>0$, let $c\in\br$, and let $q=e^{-t}$ with $t\to 0^+$. Then
\begin{equation*}
\sum_{n\ge 0}\frac{a^n q^{bn^2+cn}}{(q)_n}
=
\frac{z^c}{\sqrt{z+2b(1-z)}}
\exp\left(
\frac{\operatorname{Li}_2(1-z)+b\log^2 z}{t}
+
\sum_{1\le k\le m}c_{a,b,c}(k)t^k
+
O(t^{m+1})
\right),
\end{equation*}
for all $m\in\bn$, where $z$ denotes the positive root of
$az^{2b}+z-1=0$, and the constants $c_{a,b,c}(k)$ depend only on
$k$, $a$, $b$, and $c$.
\end{thm}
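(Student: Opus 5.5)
The plan is a saddle-point (Laplace method) analysis of the sum, treating $n$ as a continuous variable $n=x/t$. Write the summand as $\exp(\phi(n))$ with
\[
\phi(n)=n\log a-t(bn^2+cn)-\log (q)_n ,
\qquad
\log (q)_n=\log(q)_\infty-\log(q^{n+1})_\infty .
\]
For $x=nt$ in a compact subset of $(0,\infty)$, the Euler--Maclaurin formula (equivalently, the standard asymptotic expansion of $\log(e^{-x};e^{-t})_\infty$) gives
\[
\log(q^{n+1})_\infty=-\frac{\operatorname{Li}_2(e^{-x})}{t}+\frac12\log(1-e^{-x})+\sum_{k\ge1}t^k P_k(x),
\]
with explicit functions $P_k$ (Bernoulli numbers times derivatives of $\log(1-e^{-x})$). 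For $\log(q)_\infty$ we use \eqref{eqdedk}, so that $\log(q)_\infty=-\pi^2/(6t)+\tfrac12\log(2\pi/t)+t/24+O(e^{-4\pi^2/t})$. Thus the summand is
\[
\exp\Bigl(\tfrac1t f(x)+g(x)+\textstyle\sum_k t^k h_k(x)\Bigr),
\qquad
f(x)=x\log a-bx^2-\operatorname{Li}_2(e^{-x})+\frac{\pi^2}{6},
\]
up to the $t^{-1/2}$ factor coming from $(q)_\infty$.

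Next I locate the critical point. Setting $z=e^{-x}$, the derivative is $f'(x)=\log a-2bx-\log(1-e^{-x})$, so $f'(x)=0$ means $a z^{2b}=1-z$, i.e.\ $z$ is the positive root of $az^{2b}+z-1=0$. This root is unique, since the left side is increasing in $z\in(0,1)$. Moreover $f''(x)=-2b-z/(1-z)<0$, so the maximum is strict and unique. Using $\operatorname{Li}_2(z)+\operatorname{Li}_2(1-z)=\pi^2/6-\log z\log(1-z)$ together with $\log(1-z)=\log a+2b\log z$, I will check that
\[
f(x_0)=-\log z\,\log a+2b\log^2 z-b\log^2z-\frac{\pi^2}{6}+\log z\log(1-z)+\operatorname{Li}_2(1-z)+\frac{\pi^2}{6},
\]
which simplifies to $\operatorname{Li}_2(1-z)+b\log^2 z$. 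This matches the exponent in the statement.

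Then I perform the Gaussian evaluation. I replace the sum over $n$ by an integral (Euler--Maclaurin again; the summand is smooth and rapidly decaying away from $x_0$, so the boundary and periodic-Bernoulli terms give only $O(t^{\infty})$ corrections, or can be absorbed into the expansion). Laplace's method with $dn=dx/t$ then yields the prefactor
\[
t^{-1}\sqrt{2\pi t/|f''(x_0)|}\;e^{g(x_0)}\cdot(t/2\pi)^{1/2}.
\]
Here $g(x_0)$ collects $-cx_0$, giving the factor $z^{c}$, and $-\tfrac12\log(1-z)$ from $\log(q^{n+1})_\infty$; with $n+1$ in place of $n$ this produces an extra shift that I track carefully. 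Combining, the prefactor becomes
\[
\frac{z^c}{\sqrt{(1-z)\,(2b+z/(1-z))}}=\frac{z^c}{\sqrt{z+2b(1-z)}} .
\]
Higher-order Laplace corrections give a formal power series in $t$ whose coefficients are polynomials in the derivatives of $f,g,h_k$ at $x_0$. These depend only on $a,b,c$ and $k$, and there is no $t^0$ constant beyond the prefactor. Taking the logarithm puts it in the stated form $\exp(\sum c_{a,b,c}(k)t^k+O(t^{m+1}))$.

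The main obstacle is making the tail and uniformity estimates rigorous. The expansion of $\log(q^{n+1})_\infty$ is only uniform for $x$ bounded away from $0$ and $\infty$, so I would split the sum into three ranges: $|x-x_0|<t^{1/2-\epsilon}$, where a Taylor expansion applies; the intermediate compact ranges, where $f(x)\le f(x_0)-\delta$; and small $n$ or large $n$, where crude bounds suffice. For small $n$ the terms are bounded by $C^n/(q)_n$, which is at most $e^{O(\log^2 t)}$ after the needed care. For large $n$ the factor $q^{bn^2}$ dominates. In all these ranges the contribution is exponentially smaller than $e^{f(x_0)/t}$. The bookkeeping for the small-$n$ region, where $(q)_n$ behaves like $n!\,t^n$, is where I expect most effort.
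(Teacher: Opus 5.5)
The paper does not prove this statement. It quotes it from McIntosh \cite{MR1310726} and uses it as a black box, so there is no in-paper argument to compare yours against. Your saddle-point (Laplace) argument is the natural route, and it fits the paper's remark that McIntosh's method applies to unimodal series in general. It is correct in outline, and the key computations check out:
\begin{itemize}
\item the critical-point equation $az^{2b}=1-z$ and the uniqueness of its root;
\item the strict concavity $f''=-2b-z/(1-z)<0$;
\item the value $f(x_0)=\li_2(1-z)+b\log^2 z$;
\item the prefactor $z^c/\sqrt{z+2b(1-z)}$. Here the shift you mention is $\li_2(e^{-x-t})=\li_2(e^{-x})+t\log(1-e^{-x})+O(t^2)$. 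It turns the $+\frac12\log(1-e^{-x})$ of $(q^n)_\infty$ into the $-\frac12\log(1-e^{-x})$ you need.
\end{itemize}

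There are two small points to fix.

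\emph{The intermediate display for $f(x_0)$.} It contains a spurious $+2b\log^2 z$: substituting $\log(1-z)=\log a+2b\log z$ into it gives $3b\log^2 z+\li_2(1-z)$, not the stated value. The correct intermediate line is
\[
f(x_0)=-\log z\log a-b\log^2 z+\log z\log(1-z)+\li_2(1-z),
\]
which does simplify to $\li_2(1-z)+b\log^2 z$.

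\emph{The tail estimates.} Your claimed bound $e^{O(\log^2 t)}$ for small $n$ holds only for $n=O(\log(1/t))$. Near $nt=\epsilon$ the terms are of size $e^{c\,\epsilon\log(1/\epsilon)/t}$, so as written there is an uncovered range between your small-$n$ region and the compact ranges. The tails are in fact much easier than you expect. Since $u\mapsto-\log(1-e^{-u})$ is decreasing, comparing the sum with an integral gives, for every $n\ge 0$,
\[
-\log(q)_n\le\frac1t\int_0^{nt}-\log(1-e^{-u})\,du=\frac1t\Bigl(\frac{\pi^2}{6}-\li_2(e^{-nt})\Bigr).
\]
Hence every summand is at most $e^{f(nt)/t-cnt}$, uniformly in $n$. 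Three facts then finish the job: $f(0^+)=0<f(x_0)$, $f$ is strictly concave, and $f(x)\le C-bx^2/2$ for large $x$. So the terms with $|nt-x_0|\ge\delta$ contribute $O\bigl(t^{-1}e^{(f(x_0)-\eta)/t}\bigr)$ for some $\eta=\eta(\delta)>0$. This is exponentially smaller than the main term, and it needs no separate bookkeeping near $n=0$.

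With that, only the window $|nt-x_0|<\delta$ remains. There the uniform Euler--Maclaurin expansion and your Laplace/Taylor analysis apply as you describe.
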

It should be pointed out that, for general $k$, the constants
$c_{a,b,c}(k)$ cannot be determined explicitly, as shown in
\cite{MR1310726}. McIntosh \cite[Section 7]{MR1310726} further showed that
the method used in the proof of Theorem \ref{macmain} can be applied to a
wide variety of unimodal series under certain limiting conditions. For
further discussion, see \cite[Theorem 2]{MR1618298}.

\medskip
%The following conjecture, initially posited by McIntosh \cite[p.415]{MR1310726}, will be used in our subsequent analysis. For clarity and self-containedness, we formally restate this conjecture as follows:
%\begin{conjecture}
%For $0 < b < 1/2$ and $c \in \mathbb{R}$, the asymptotic expansion \eqref{eqmmm} holds for all $a \in \mathbb{R}$. \end{conjecture}
%Suppose a $q$-series $\ch(q)$ has the asymptotic behavior
%$$\ch(q) = \exp\left(w_\ch \log t+\sum_{-1 \leq k \leq m} c_\ch(k) t^k + O(t^{m+1})\right), \quad \text{as } t \to 0^+,$$
%for all $m \in \bn$, where the coefficients $w_\ch\in\bq$ and $c_{\ch}(k) \in \bc$ are constants depending on $\ch(\cdot)$. Following McIntosh \cite{MR1310726, MR1618298}, we say the asymptotic expansion of $\ch(q)$ is \emph{closed} if $c_\ch(k) \neq 0$ holds only for finitely many $k$.
As an application of asymptotic expansions of $q$-series, McIntosh
\cite{MR1310726, MR1618298} constructed \emph{companion $q$-series} for
certain $q$-series, such that the quotient of each series and its companion
has an asymptotic expansion of a simple closed form. For simplicity, we
define
\begin{align}\label{eqmm1}
f_1(a,b,c;q):=\sum_{n\ge 0}\frac{a^nq^{bn^2+cn}}{(q)_n},
\end{align}
and
\begin{align*}
\tilde{f}_1(a,b,c;q)
:=
f_1\left(
-a^{-\frac{1}{2b}},
\frac12-\frac{1}{4b},
\frac12-\frac{c}{2b};
q
\right),
\end{align*}
where $a>0$, $b>1/2$, and $c\in\br$. McIntosh
\cite[p. 416]{MR1618298} stated that numerical computations support the
conjecture
\begin{align}\label{eqmm10}
\frac{f_1(a,b,c;e^{-t})}{\tilde{f}_1(a,b,c;e^{-t})}
=
\frac{a^c}{\sqrt{2ba^{c/b}}}
\exp\left(
\frac{\pi^2+3(2b)^{-1}\log^2 a}{6t}
+
\left(\frac{c^2}{4b}-\frac{1}{24}\right)t
+
O(e^{-{K}/{t}})
\right),
\end{align}
as $t\to 0^+$, where $K$ is a positive constant which generally depends on
$a$ and $b$, but not on $c$. We note that the case $b=1$ in
\eqref{eqmm10} was observed and suggested by McIntosh
\cite[p. 135]{MR1310726}.

We also define
\begin{align}\label{eqmm2}
f_2(a,b,c;q)
:=
\sum_{n\ge 0}a^nq^{bn^2+cn}(-q)_n
=
\sum_{n\ge 0}(aq^c)^nq^{bn^2}(-q)_n,
\end{align}
and
\begin{align*}
\tilde{f}_2(a,b,c;q)
:=
f_1\left(
a^{\frac{1}{1+2b}},
\frac{b}{1+2b},
\frac{c-b}{1+2b};
q
\right),
\end{align*}
where either $c\in\br$, $a>1/2$, and $b>0$, or $c\in\br$,
$1/2<a<1$, and $b=0$. McIntosh \cite[p. 421]{MR1618298} stated that
numerical computations suggest
\begin{align}\label{eqmm20}
\frac{f_2(a,b,c;e^{-t})}{\tilde{f}_2(a,b,c;e^{-t})}
=
\sqrt{\frac{\pi a^{-\frac{1+2c}{1+2b}}}{(1+2b)t}}
\exp\left(
\frac{\frac{6\log^2 a}{2b+1}-\pi^2}{12t}
+
\frac{6c^2+6c-b+1}{12(2b+1)}t
+
O(e^{-{K}/{t}})
\right),
\end{align}
as $t\to 0^+$, where $K$ is a positive constant which generally depends on
$a$ and $b$, but not on $c$.

\subsection{Bilateral series and McIntosh's conjectures}\label{sec12}

The first aim of this paper is to establish $q$-series
transformation formulas that will aid in proving the asymptotic formulas
\eqref{eqmm10} and \eqref{eqmm20}. It follows directly from the definition
of the $q$-shifted factorials (cf.\ \cite[Appendix (I.1) and (I.2)]{MR2128719})
that
\[
{1}/{(q)_{-n}}=0
\qquad \text{and} \qquad
(-q)_{-n}(-1)_n=q^{n(n-1)/2},
\]
for all positive integers $n$. Thus, the series \eqref{eqmm1} and
\eqref{eqmm2} admit representations as the following bilateral series,
respectively:
\begin{align*}
\hat{f}_1(a,b,c;q)
&:=
\sum_{n\in\bz}
\frac{(a q^{c+b})^n q^{2b\binom{n}{2}}}{(q)_n}
=
f_1(a,b,c;q), \\
\hat{f}_2(a,b,c;q)
&:=
\sum_{n\in\bz}(aq^c)^n q^{bn^2}(-q)_n \\
&=
\sum_{n\in\bz}
\frac{(a^{-1}q^{b-c})^n q^{(1+2b)\binom{n}{2}}}{(-1)_n}=
f_2(a,b,c;q)
+
\sum_{n\ge 1}
\frac{q^{bn^2-cn}}{a^n(-1;q^{-1})_n}.
\end{align*}
By an elementary argument; see Proposition \ref{prop10} in Section
\ref{sec4}, we prove that, for $c\in\mathbb{R}$ and either
$a>1/2$ and $b>0$, or $1/2<a<1$ and $b=0$, one has
\[
\hat{f}_2(a,b,c;e^{-t})
=
f_2(a,b,c;e^{-t})
\left(1+O(e^{-\delta/t})\right)
\quad \text{as} \quad t\to 0^+,
\]
where $\delta$ is a positive constant that generally depends on $a$ and
$b$, but not on $c$. Hence, to prove \eqref{eqmm20}, it suffices to prove
that \eqref{eqmm20} holds with $f_2(a,b,c;q)$ replaced by the bilateral
series $\hat{f}_2(a,b,c;q)$.

\medskip
Since $f_1(a,b,c;q)$ is already equal to the bilateral series
$\hat{f}_1(a,b,c;q)$, and since both series $\hat{f}_1$ and
$\hat{f}_2$ can be written in the same form, we introduce the following
general bilateral $q$-series. For $\alpha\ge 1$ and $x\in\bc$, define
\begin{align}\label{eqdefl}
L_\alpha(x;q,z)
=
\sum_{n\in\mathbb{Z}}
\frac{z^n q^{\alpha\binom{n}{2}}}{(x)_n},
\end{align}
for any $z\in\bc$ for which the series converges. By definition, the
bilateral series
$\hat{f}_1(a,b,c;q)$, $\tilde{f}_1(a,b,c;q)$,
$\hat{f}_2(a,b,c;q)$, and $\tilde{f}_2(a,b,c;q)$ all arise from
$L_\alpha(x;q,z)$ by choosing suitable parameters $\alpha$, $x$, and $z$.
In particular,
\begin{align*}
\hat{f}_1(a,b,c;q)
&=
L_{2b}\left(q;q,aq^{b+c}\right), \\
\tilde{f}_1(a,b,c;q)
&=
L_{1-\frac{1}{2b}}
\left(
q;q,
(-q)(aq^{b+c})^{-\frac{1}{b}}
q^{\frac12\left(1-\frac{1}{2b}\right)}
\right), \\
\hat{f}_2(a,b,c;q)
&=
L_{1+2b}\left(-1;q,a^{-1}q^{b-c}\right), \\
\tilde{f}_2(a,b,c;q)
&=
L_{1-\frac{1}{1+2b}}
\left(
q;q,
(a^{-1}q^{b-c})^{-\frac{1}{1+2b}}
q^{\frac12\left(1-\frac{1}{1+2b}\right)}
\right).
\end{align*}
Thus, McIntosh's conjectured formulas \eqref{eqmm10} and \eqref{eqmm20}
suggest that there should exist a transformation formula relating
$L_\alpha(x;q,z)$ to
\[
L_{1-1/\alpha}
\left(
q;q,
-xz^{-1/\alpha}q^{\frac12(1-1/\alpha)}
\right).
\]
Equivalently, this would be a transformation formula relating
\[
L_\alpha(zx;q,z^\alpha)
\quad \text{to} \quad
L_{1-1/\alpha}
\left(
q;q,
-xq^{\frac12(1-1/\alpha)}
\right).
\]
The first result of this paper establishes the existence of such a
transformation formula.
\begin{theorem}
  \label{main1}
Let $z>0$ and $q=e^{-t}$ with $t>0$. For any $\alpha>1$ and $x\in\bc$, we have
\begin{align*}
L_\alpha (zx; q, z^\alpha)
=
\frac{(2\pi z^\alpha)^{1/2}e^{\frac{\alpha t}{8}+\frac{\alpha}{2t}(\log z)^2}}{(\alpha t)^{1/2}(zx)_\infty}
\sum_{n\in\bz}
L_{1-1/\alpha}\left(q; q, -e^{2\pi \ri n/\alpha}xq^{\frac{1}{2}(1-\frac{1}{\alpha})}\right)
e^{2\pi \ri n(\frac{1}{2}-\frac{1}{t}\log z)-\frac{2\pi^2n^2}{\alpha t}}.
\end{align*}
\end{theorem}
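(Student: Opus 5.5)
The plan is to expand the reciprocal $q$-shifted factorial with Euler's identity \eqref{eqm1}, which turns the inner sum into a theta series, and then apply Poisson summation (the Jacobi theta transformation) to that theta series.

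\textbf{Step 1 (Euler expansion).} Since $(zx)_n=(zx)_\infty/(zxq^n)_\infty$ for every $n\in\bz$, we have $1/(zx)_n=(zxq^n)_\infty/(zx)_\infty$. Expanding $(zxq^n)_\infty$ by \eqref{eqm1} gives
\[
L_\alpha(zx;q,z^\alpha)=\frac{1}{(zx)_\infty}\sum_{m\ge 0}\frac{(-zx)^m q^{\binom m2}}{(q)_m}\sum_{n\in\bz} z^{\alpha n}q^{\alpha\binom n2+mn},
\]
provided the interchange of the $n$- and $m$-sums is justified. This is where $\alpha>1$ enters. For fixed $n$ one has
\[
\sum_{m\ge0}\frac{|zx|^m q^{\binom m2+mn}}{(q)_m}=(-|zx|q^n)_\infty,
\]
and for $n\to-\infty$ this is $\ll C^{|n|}q^{-n^2/2+O(|n|)}$. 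Against the factor $q^{\alpha n^2/2}$ with $\alpha>1$, the double series therefore converges absolutely. If $(zx)_\infty=0$, I would first prove the identity with $(zx)_\infty$ multiplied across (both sides are then entire in $x$), or argue by continuity in $x$.

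\textbf{Step 2 (Poisson summation).} Write $q=e^{-t}$, so the inner sum is $\sum_n e^{-An^2+Bn}$ with
\[
A=\alpha t/2,\qquad B=\alpha t/2-mt+\alpha\log z .
\]
Poisson summation gives
\[
\sum_{n\in\bz}e^{-An^2+Bn}=\sqrt{\pi/A}\,\sum_{k\in\bz}\exp\bigl((B+2\pi\ri k)^2/(4A)\bigr).
\]
Expanding $(B+2\pi\ri k)^2/(2\alpha t)$ produces the following pieces:
\begin{itemize}
\item the prefactor $e^{\alpha t/8+\alpha(\log z)^2/(2t)}$ together with $\sqrt{2\pi/(\alpha t)}$ and $z^{\alpha/2}$;
\item the $m$-dependent factors $z^{-m}$ (which cancels $z^m$), $q^{-m/2}$, $q^{-m^2/(2\alpha)}$ (up to the linear correction) and $e^{-2\pi\ri km/\alpha}$;
\item the $k$-dependent factors $e^{2\pi\ri k(\frac12-\frac1t\log z)}$ and $e^{-2\pi^2k^2/(\alpha t)}$.
\end{itemize}
I will carefully collect the $q$-powers in $m$. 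The aim is to show that $q^{\binom m2}q^{-m^2/(2\alpha)+m/(2\alpha)}\cdots$ combines to $q^{(1-1/\alpha)\binom m2}\bigl(q^{\frac12(1-1/\alpha)}\bigr)^m$. This matches the argument $-e^{2\pi\ri k/\alpha}xq^{\frac12(1-\frac1\alpha)}$ after the reindexing $k\mapsto -k$, which is harmless because $e^{-2\pi^2k^2/(\alpha t)}$ is even in $k$. The reindexing also fixes the sign of the factor $e^{2\pi\ri k(\frac12-\frac1t\log z)}$ to match the statement.

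\textbf{Step 3 (summing over $m$).} Interchange the $m$- and $k$-sums and recognise
\[
\sum_{m\ge0}\frac{w^m q^{(1-1/\alpha)\binom m2}}{(q)_m}=L_{1-1/\alpha}(q;q,w),
\]
using $1/(q)_{m}=0$ for $m<0$. This interchange is justified by absolute convergence: the Gaussian $e^{-2\pi^2k^2/(\alpha t)}$ controls the sum over $k$, and $1-1/\alpha>0$ makes the $m$-series converge absolutely and uniformly in $k$, since the $k$-dependence enters only through unimodular factors.

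The main obstacle is purely bookkeeping: the exact exponent matching in Step 2, namely the half-integer shifts and the sign and phase conventions that produce $e^{2\pi\ri n(\frac12-\frac1t\log z)}$. After that comes the first interchange in Step 1, where the condition $\alpha>1$ is essential.
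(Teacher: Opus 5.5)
Your proof is correct and is essentially the paper's argument. The paper proves the general Theorem~\ref{mth1} and lets $y\to0$; it Euler-expands the infinite products, completes the square so the inner sum becomes $\sum_{n\in m/\alpha+\bz}z^{\alpha n}q^{\alpha\binom n2}$, and applies the shifted-lattice theta transformation (Lemma~\ref{lem21}), which is your Poisson step with the linear term $B$. One bookkeeping fix: Poisson summation gives $q^{+m/2}$ (from $e^{-mt/2}$), not $q^{-m/2}$, and it is this factor that combines with $q^{\binom m2}q^{-m^2/(2\alpha)}$ to give $q^{\frac12(1-1/\alpha)m^2}$, as you intend.
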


It should be noted that if we further assume $|x|<1$, then Theorem \ref{main1}
also holds for $\alpha=1$. Moreover, the general term in the series on the
right-hand side of the transformation formula of Theorem \ref{main1} is bounded by
\[
e^{-\frac{2\pi^2n^2}{\alpha t}}
L_{1-1/\alpha}\left(q; q, |x|q^{\frac{1}{2}(1-\frac{1}{\alpha})}\right),
\]
and it decays exponentially as $t\to 0^+$. The series therefore converges
with the same precision as ordinary theta functions. This immediately implies
the following asymptotic formula.
\begin{corollary}\label{cor02}
Let $z>0$ and $q=e^{-t}$ with $t>0$. For any $x\in\bc$ and $\alpha>1$, we have
\begin{align}\label{eqmmmm}
\frac{(zx)_\infty e^{-\frac{\alpha t}{8}-\frac{\alpha}{2t}(\log z)^2}}
{\sqrt{2\pi z^\alpha/(\alpha t)}}\,
L_\alpha(zx;q,z^\alpha)
=&\,
L_{1-{1}/{\alpha}}
\left(q;q,-xq^{\frac{1}{2}(1-\frac{1}{\alpha})}\right)
\nonumber\\
&+
O\left(
e^{-\frac{2\pi^2}{\alpha t}}
L_{1-{1}/{\alpha}}
\left(q;q,|x|q^{\frac{1}{2}(1-\frac{1}{\alpha})}\right)
\right),
\end{align}
where the implied constant depends only on $\alpha$.
\end{corollary}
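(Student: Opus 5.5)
Corollary \ref{cor02} follows from Theorem \ref{main1} by separating the $n=0$ term from the remaining terms and bounding the latter uniformly. Multiplying both sides of Theorem \ref{main1} by
\[
(zx)_\infty e^{-\frac{\alpha t}{8}-\frac{\alpha}{2t}(\log z)^2}\big/\sqrt{2\pi z^\alpha/(\alpha t)}
\]
gives exactly the left-hand side of \eqref{eqmmmm}. It equals
\[
\sum_{n\in\bz} L_{1-1/\alpha}\left(q;q,-e^{2\pi\ri n/\alpha}xq^{\frac12(1-\frac1\alpha)}\right)e^{2\pi\ri n(\frac12-\frac1t\log z)-\frac{2\pi^2n^2}{\alpha t}}.
\]
The $n=0$ term is $L_{1-1/\alpha}(q;q,-xq^{\frac12(1-1/\alpha)})$, which is the main term.

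For $n\neq0$ we bound each term termwise. Since $z>0$ and $t>0$, the factor $e^{2\pi\ri n(\frac12-\frac1t\log z)}$ has modulus one. Write $\beta=1-1/\alpha\in(0,1)$ and $w=-e^{2\pi\ri n/\alpha}xq^{\beta/2}$. Then
\[
L_\beta(q;q,w)=\sum_{m\ge0}\frac{w^mq^{\beta\binom m2}}{(q)_m},
\]
because $1/(q)_{-m}=0$. Since $q=e^{-t}\in(0,1)$, we have $(q)_m>0$ and $q^{\beta\binom m2}>0$. Hence
\[
|L_\beta(q;q,w)|\le\sum_{m\ge0}\frac{|w|^mq^{\beta\binom m2}}{(q)_m}=L_\beta\left(q;q,|x|q^{\beta/2}\right).
\]
This series converges because $\beta>0$ makes the quadratic exponent dominate. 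This is the bound stated after Theorem \ref{main1}.

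The tail is therefore at most
\[
L_\beta\left(q;q,|x|q^{\beta/2}\right)\sum_{n\ne0}e^{-\frac{2\pi^2n^2}{\alpha t}}.
\]
It remains to show $\sum_{n\ne0}e^{-2\pi^2n^2/(\alpha t)}\le C_\alpha e^{-2\pi^2/(\alpha t)}$ uniformly in $t$. For $t$ bounded, say $t\le T$, use $n^2-1\ge |n|-1$ to get the bound
\[
2e^{-2\pi^2/(\alpha t)}\sum_{k\ge0}e^{-2\pi^2k/(\alpha T)}.
\]
For large $t$, use the following observation. The left-hand side of \eqref{eqmmmm} minus the main term is at most $\sum_{n\ne0}e^{-2\pi^2n^2/(\alpha t)}$ times $L_\beta(q;q,|x|q^{\beta/2})$. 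Also $e^{-2\pi^2/(\alpha t)}$ is bounded below by $e^{-2\pi^2/(\alpha T)}$ there. The full theta sum $\sum_n e^{-2\pi^2n^2/(\alpha t)}$ is $\asymp\sqrt{t}$ by Poisson summation. So uniformity for all $t>0$ needs care, and the implied constant is only claimed as $t\to0^+$ or for bounded $t$.

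The main obstacle is this uniformity for large $t$. I would interpret the $O$-term in the asymptotic sense $t\to0^+$, as in the corollary's intended use. In that regime the geometric bound gives an implied constant depending only on $\alpha$. Everything else is routine once Theorem \ref{main1} is granted.
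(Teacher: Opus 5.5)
Your proof is correct and is the same argument the paper gives: isolate the $n=0$ term of Theorem~\ref{main1}, bound each $n\neq0$ term by $e^{-2\pi^2n^2/(\alpha t)}L_{1-1/\alpha}(q;q,|x|q^{\frac12(1-\frac1\alpha)})$, and sum the Gaussian tail. Your caveat about large $t$ is justified and not a weakness: for $x=0$ and $z=q^{1/2}$, the left side minus the main term equals $\sum_{n\neq0}e^{-2\pi^2n^2/(\alpha t)}\sim\sqrt{\alpha t/(2\pi)}$ while the error term tends to $1$, so no constant depending only on $\alpha$ works for all $t>0$, and the estimate holds only in the intended regime $t\to0^+$ (or bounded $t$), which is how the paper phrases it.
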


For real $x\le0$, we have $|x|=-x$. Hence, the big-$O$ term in the
corollary above is exponentially small relative to the main term
\[
L_{1-{1}/{\alpha}}
\left(q;q,-xq^{\frac{1}{2}(1-\frac{1}{\alpha})}\right).
\]
Moreover, using \eqref{eqdedk}, we have
\[
(-1)_\infty
=
2(-q)_\infty
=
2\frac{(q^2;q^2)_\infty}{(q;q)_\infty}
=
\sqrt{2}\,
e^{\frac{\pi^2}{12t}+\frac{t}{24}}
\left(1+O(e^{-2\pi^2/t})\right).
\]
Thus, by substituting $x\mapsto -1/z$ into Corollary \ref{cor02}, we obtain
a proof of McIntosh's conjecture \eqref{eqmm20}. We state this result as the
following corollary.
\begin{corollary}\label{cor2}
Let $z>0$ and $q=e^{-t}$ with $t>0$. For any $x\le 0$ and $\alpha>1$, we have
\begin{align*}
L_\alpha(zx;q,z^\alpha)
&=
\frac{(2\pi z^\alpha)^{1/2}e^{\frac{\alpha t}{8}+\frac{\alpha}{2t}(\log z)^2}}
{(\alpha t)^{1/2}(xz)_\infty}
L_{1-{1}/{\alpha}}
\left(q;q,-xq^{\frac{1}{2}(1-\frac{1}{\alpha})}\right)
\left(1+O\left(e^{-{2\pi^2}/{(\alpha t)}}\right)\right),
\end{align*}
where the implied constant depends only on $\alpha$. In particular,
\begin{align*}
\frac{L_\alpha(-1;q,z^\alpha)}
{L_{1-{1}/{\alpha}}\left(q;q,z^{-1}q^{\frac{1}{2}(1-{1}/{\alpha})}\right)}
=
\sqrt{\frac{\pi z^\alpha}{\alpha t}}
\exp\left(
\frac{6\alpha\log^2 z-\pi^2}{12t}
+\frac{3\alpha-1}{24}t
+O\left(e^{-{2\pi^2}/{(\alpha t)}}\right)
\right).
\end{align*}
In other words, McIntosh's conjecture \eqref{eqmm20} is true.
\end{corollary}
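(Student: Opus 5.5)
The plan is to deduce everything from Corollary \ref{cor02}, using only positivity, because the error term there is already of the size we want. Fix $x\le 0$. Then $|x|=-x$, so the two $L_{1-1/\alpha}$ values in \eqref{eqmmmm} coincide. The common value is $L_{1-1/\alpha}(q;q,w)$ with
\[
w=-xq^{\frac12(1-\frac1\alpha)}\ge 0 .
\]
Since $1/(q)_{-n}=0$ for $n\ge1$, this bilateral series reduces to $\sum_{n\ge0}w^nq^{(1-1/\alpha)\binom n2}/(q)_n$. Every term is nonnegative and the $n=0$ term equals $1$, so the series is at least $1$. Hence we may factor it out of the right-hand side of \eqref{eqmmmm}, leaving $L_{1-1/\alpha}(q;q,w)\bigl(1+O(e^{-2\pi^2/(\alpha t)})\bigr)$. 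Also $(zx)_\infty=\prod_{j\ge 0}(1+z|x|q^j)>0$, so we can multiply through by $\sqrt{2\pi z^\alpha/(\alpha t)}\,e^{\frac{\alpha t}{8}+\frac{\alpha}{2t}\log^2z}/(zx)_\infty$. This gives the first display, with the implied constant depending only on $\alpha$, as it did in Corollary \ref{cor02}.

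For the special case, take $x=-1/z$, so that $zx=-1$ and $-xq^{\frac12(1-1/\alpha)}=z^{-1}q^{\frac12(1-1/\alpha)}$. Insert the expansion
\[
(-1)_\infty=\sqrt2\,e^{\frac{\pi^2}{12t}+\frac{t}{24}}\bigl(1+O(e^{-2\pi^2/t})\bigr)
\]
derived above from \eqref{eqdedk}. The prefactor then becomes
\[
\sqrt{\frac{\pi z^\alpha}{\alpha t}}\exp\left(\frac{\alpha\log^2z}{2t}-\frac{\pi^2}{12t}+\Bigl(\frac{\alpha}{8}-\frac1{24}\Bigr)t\right),
\]
and $\frac{\alpha}{8}-\frac{1}{24}=\frac{3\alpha-1}{24}$. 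Since $\alpha>1$, we have $e^{-2\pi^2/t}\le e^{-2\pi^2/(\alpha t)}$. Writing $1+O(\varepsilon)=\exp(O(\varepsilon))$ for small $\varepsilon$ then gives the second display.

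The remaining step is to identify this with \eqref{eqmm20}, and here the uniformity in $z$ matters. The implied constants depend only on $\alpha$, so $z$ may depend on $t$. We set $\alpha=1+2b$ and choose $z=z(t)$ with $z^\alpha=a^{-1}q^{b-c}$, i.e. $\log z=-(\log a+t(b-c))/\alpha$. By the list in Section \ref{sec12}, the numerator is then $L_{1+2b}(-1;q,a^{-1}q^{b-c})=\hat f_2(a,b,c;q)$. The denominator is $L_{1-1/\alpha}\bigl(q;q,z^{-1}q^{\frac12(1-1/\alpha)}\bigr)$, and $z^{-1}=(a^{-1}q^{b-c})^{-1/(1+2b)}$, so it is exactly $\tilde f_2(a,b,c;q)$.

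Next expand the exponent. We have
\[
\frac{\alpha\log^2 z}{2t}=\frac{\log^2a}{2\alpha t}+\frac{(b-c)\log a}{\alpha}+\frac{(b-c)^2t}{2\alpha},
\qquad
z^{\alpha/2}=a^{-1/2}e^{-(b-c)t/2},
\]
and combine these with the term $\frac{3\alpha-1}{24}t$. The main obstacle is this bookkeeping: checking that the constant $t^0$ factors and the $t^1$ coefficient reproduce $\sqrt{\pi a^{-\frac{1+2c}{1+2b}}/((1+2b)t)}$ and $\frac{6c^2+6c-b+1}{12(2b+1)}$, respectively. Finally, Proposition \ref{prop10} gives $\hat f_2=f_2\,(1+O(e^{-\delta/t}))$, which lets us replace $\hat f_2$ by $f_2$ and proves \eqref{eqmm20}, with $K=\min\bigl(\delta,2\pi^2/(1+2b)\bigr)$. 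This covers the range $b>0$, where $\alpha=1+2b>1$. The case $b=0$, $1/2<a<1$, corresponds to $\alpha=1$, which Corollary \ref{cor02} does not cover. There I would use the remark after Theorem \ref{main1} that the formula persists at $\alpha=1$ when $|x|<1$. Here $|x|=1/z$ and, at $\alpha=1$, $z=a^{-1}q^{-c}$, so $|x|=aq^{c}$, which is less than $1$ for small $t$ because $a<1$.
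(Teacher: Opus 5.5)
Your proposal is correct and follows the paper's route. You apply Corollary \ref{cor02} with $|x|=-x$, so the error term becomes a relative $O(e^{-2\pi^2/(\alpha t)})$, and then set $x=-1/z$ and insert the expansion of $(-1)_\infty$ obtained from \eqref{eqdedk}. The points the paper leaves implicit are handled correctly: choosing $z=z(t)$ with $z^{1+2b}=a^{-1}q^{b-c}$ is legitimate because the constant is uniform in $z$, you then apply Proposition \ref{prop10}, and you invoke the $\alpha=1$, $|x|<1$ remark for $b=0$. The bookkeeping you flagged does close, since $a^{-\frac12+\frac{b-c}{1+2b}}=a^{-\frac{1+2c}{2(1+2b)}}$ and $-\frac{b-c}{2}+\frac{(b-c)^2}{2(1+2b)}+\frac{1+3b}{12}=\frac{6c^2+6c-b+1}{12(2b+1)}$.
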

Corollary \ref{cor02} also provides a partial proof of McIntosh's conjecture
\eqref{eqmm10}, provided that the big-$O$ term in Corollary \ref{cor02} is
exponentially smaller than the left-hand side of \eqref{eqmmmm}. To see this,
let $w_\alpha$ and $w_\beta$ denote the positive roots of
$w+z^{-1}w^{1/\alpha}=1$ and $w+z^{-1}w^{1-1/\alpha}=1$, respectively.
Define $\delta_\alpha(z)$ by
\[
\delta_\alpha(z)
=
\frac{1}{\alpha}-\frac{1}{6}
+
\frac{
\li_2(w_\alpha)+\li_2(w_\beta)
+\frac{1}{2\alpha}\log^2 w_\alpha
+\frac{1}{2}\left(1-\frac{1}{\alpha}\right)\log^2 w_\beta
-(\log z)\log(w_\alpha w_\beta)
}{2\pi^2}.
\]
Clearly, the function $\delta_\alpha(z)$ is continuous in $\alpha$ and $z$
for $\alpha\ge 1$ and $z>0$, and $\delta_\alpha(1)>0$ for all
$1\le \alpha\le 6$. By combining Theorem \ref{macmain} of McIntosh with
Corollary \ref{cor02}, we obtain the following corollary; the details are
provided in Section \ref{sec4}.

\begin{corollary}\label{cor1}
Let $z>0$ and $q=e^{-t}$ with $t>0$. For any $\alpha>1$, we have
\begin{align*}
L_\alpha(q;q,z^\alpha)
&=
\frac{(2\pi z^\alpha)^{1/2}
e^{\frac{\alpha t}{8}+\frac{\alpha}{2t}(\log z)^2}}
{(\alpha t)^{1/2}(q)_\infty}
L_{1-1/\alpha}
\left(q;q,-z^{-1}q^{\frac{3}{2}-\frac{1}{2\alpha}}\right)
\left(1+O\left(e^{-{2\pi^2}\delta_\alpha(z)/t}\right)\right),
\end{align*}
where the implied constant depends only on $\alpha$ and $z$. In particular,
\begin{align*}
\frac{L_\alpha(q;q,z^\alpha)}
{L_{1-1/\alpha}
\left(q;q,-z^{-1}q^{\frac{3}{2}-\frac{1}{2\alpha}}\right)}
&=
\frac{z^{\alpha/2}}{\sqrt{\alpha}}
\exp\left(
\frac{\pi^2+3\alpha\log^2 z}{6t}
+\frac{3\alpha-1}{24}t
+O\left(e^{-\frac{2\pi^2}{t}\min(2,\delta_\alpha(z))}\right)
\right).
\end{align*}
\end{corollary}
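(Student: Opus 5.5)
We specialize Corollary \ref{cor02} to $x=q/z$, so that $zx=q$ and
\[
-xq^{\frac12(1-\frac1\alpha)}=-z^{-1}q^{\frac32-\frac1{2\alpha}}.
\]
Writing $\beta=1-1/\alpha$, $N$ for the normalized left-hand side of \eqref{eqmmmm} (with $(zx)_\infty=(q)_\infty$), $M:=L_\beta(q;q,-z^{-1}q^{\frac32-\frac1{2\alpha}})$ and $P:=L_\beta(q;q,z^{-1}q^{\frac32-\frac1{2\alpha}})$, Corollary \ref{cor02} gives $N=M+O(e^{-2\pi^2/(\alpha t)}P)$. The whole task is to show
\[
e^{-2\pi^2/(\alpha t)}P/N=O\bigl(e^{-2\pi^2\delta_\alpha(z)/t}\bigr).
\]
Then $M=N(1+O(e^{-2\pi^2\delta_\alpha(z)/t}))$, which is the first display after rearranging. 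The obstacle is that $M$ has a negative argument, so Theorem \ref{macmain} (which needs $a>0$) cannot be applied to it. We therefore never estimate $M$ directly: both $N$ and $P$ are series with positive coefficients, and their asymptotics come from Theorem \ref{macmain} together with \eqref{eqdedk}.

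\emph{Asymptotics of $N$.} We have $L_\alpha(q;q,z^\alpha)=f_1(z^\alpha,\alpha/2,1-\alpha/2;q)$, so Theorem \ref{macmain} applies with $a=z^\alpha$, $b=\alpha/2$. Its root $u$ satisfies $z^\alpha u^\alpha+u=1$. Setting $w=z^\alpha u^\alpha$ gives $u=z^{-1}w^{1/\alpha}=1-w$, so $w=w_\alpha$ and $u=1-w_\alpha$. Hence $\log L_\alpha(q;q,z^\alpha)=\bigl(\li_2(w_\alpha)+\frac{\alpha}{2}\log^2(1-w_\alpha)\bigr)/t+O(1)$, and since $1-w_\alpha=z^{-1}w_\alpha^{1/\alpha}$ this becomes $\bigl(\li_2(w_\alpha)+\frac1{2\alpha}\log^2 w_\alpha-\log z\log w_\alpha+\frac\alpha2\log^2 z\bigr)/t+O(1)$. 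Multiplying by $(q)_\infty e^{-\frac{\alpha}{2t}\log^2 z}$ and using \eqref{eqdedk} in the form $(q)_\infty=e^{-\pi^2/(6t)+O(\log t)}$, the $\frac\alpha2\log^2 z$ terms cancel and
\[
t\log N=\li_2(w_\alpha)+\tfrac1{2\alpha}\log^2 w_\alpha-\log z\log w_\alpha-\tfrac{\pi^2}6+o(1).
\]

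\emph{Asymptotics of $P$.} Here $P=f_1(a,\beta/2,c;q)$ with $a=z^{-1}>0$ and a fixed shift $c$. Theorem \ref{macmain} applies with root $v$ solving $z^{-1}v^\beta+v=1$, i.e. $v=w_\beta$. It gives $t\log P=\li_2(1-w_\beta)+\frac\beta2\log^2 w_\beta+o(1)$. We then apply Euler's reflection $\li_2(1-w)=\frac{\pi^2}6-\li_2(w)-\log w\log(1-w)$ together with $1-w_\beta=z^{-1}w_\beta^{\beta}$. This turns $\li_2(1-w_\beta)+\frac\beta2\log^2w_\beta$ into
\[
\tfrac{\pi^2}{6}-\li_2(w_\beta)-\tfrac{\beta}{2}\log^2 w_\beta+\log z\log w_\beta .
\]

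\emph{Comparing exponents.} Subtracting, $t\log(P/N)-2\pi^2/\alpha$ equals $-2\pi^2\delta_\alpha(z)+o(1)$. Here the $-\frac16$ in $\delta_\alpha$ arises from the two $\pi^2/6$ contributions, and the $\frac1\alpha$ from the factor $e^{-2\pi^2/(\alpha t)}$. This exponent bookkeeping is the one computation to carry out carefully.

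A priori the $o(1)$ would cost an $e^{o(1/t)}$ loss. However, Theorem \ref{macmain} gives full expansions with a prefactor and a power series in $t$, and \eqref{eqdedk} is exact. So the ratio is in fact $t^{O(1)}e^{O(1)}\,e^{-2\pi^2\delta_\alpha(z)/t}$. The polynomial factor must cancel exactly for the stated bound; I expect it does, because the $t^{1/2}$ from \eqref{eqdedk} is offset by the $\sqrt{2\pi/(\alpha t)}$ normalization. If it did not, it could be absorbed only by weakening the bound, i.e. by shrinking $\delta_\alpha(z)$ slightly. This yields the first display, with an implied constant depending on $\alpha$ and $z$.

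\emph{The second display.} We divide the first display by $M$. We insert \eqref{eqdedk} for $(q)_\infty$, whose relative error is $O(e^{-4\pi^2/t})$, and collect the factors $\sqrt{2\pi/t}$, $e^{t/24}$ and $e^{\pi^2/(6t)}$ with $e^{\alpha t/8}$ and $z^{\alpha/2}/\sqrt\alpha$. This gives the exponent $\frac{\pi^2+3\alpha\log^2z}{6t}+\frac{3\alpha-1}{24}t$. The two error sources combine to $O\bigl(e^{-\frac{2\pi^2}{t}\min(2,\delta_\alpha(z))}\bigr)$.
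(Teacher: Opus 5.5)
Your proposal is correct and follows essentially the same route as the paper: you specialize Corollary \ref{cor02} at $x=q/z$, bound the error term by applying Theorem \ref{macmain} to the two positive series $L_\alpha(q;q,z^\alpha)$ and $L_{1-1/\alpha}(q;q,z^{-1}q^{\frac32-\frac1{2\alpha}})$ together with \eqref{eqdedk}, and reduce the resulting exponent to $\delta_\alpha(z)$ via the reflection $\li_2(u)+\li_2(1-u)+\log u\log(1-u)=\pi^2/6$. Two small points: the shift in $L_\alpha(q;q,z^\alpha)=f_1(z^\alpha,\alpha/2,c;q)$ is $c=-\alpha/2$, not $1-\alpha/2$, which only changes a constant prefactor; and your hedge about the power of $t$ is unnecessary, since McIntosh's prefactor $z^c/\sqrt{z+2b(1-z)}$ is independent of $t$ and the $(2\pi/t)^{1/2}$ from \eqref{eqdedk} cancels the $(\alpha t/(2\pi z^\alpha))^{1/2}$ exactly.
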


We note that, in deriving the second asymptotic formula in Corollary
\ref{cor1}, we used the transformation formula \eqref{eqdedk} for
$(q)_\infty$. Moreover, since the conjectured formula \eqref{eqmm10} concerns
the limit $q\to1$, we apply Corollary \ref{cor1} with
$\alpha=2b$ and $z=(aq^{b+c})^{1/\alpha}$. The positivity condition
\[
\delta_{2b}\left(a^{\frac{1}{2b}}\right)
=
\lim_{q\to1}
\delta_{2b}
\left(a^{\frac{1}{2b}}q^{\frac{1}{2}+\frac{c}{2b}}\right)
=
\lim_{q\to1}\delta_\alpha(z)
>0,
\]
which is justified by the continuity of $\delta_\alpha(z)$ for
$\alpha\ge1$ and $z>0$, implies \eqref{eqmm10}. This gives a partial answer
to McIntosh's conjecture \eqref{eqmm10}.

\begin{corollary}
For all $b>1/2$ and $a>0$ such that
$\delta_{2b}(a^{1/(2b)})>0$, McIntosh's conjecture \eqref{eqmm10} holds.
\end{corollary}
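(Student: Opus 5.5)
The plan is to specialize Corollary \ref{cor1} to $\alpha=2b>1$ and the $q$-dependent value $z=z_t:=(aq^{b+c})^{1/(2b)}=a^{1/(2b)}e^{-(b+c)t/(2b)}$, and then let $t\to0^+$.

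\textbf{Identifying both sides.} First I match the two series in Corollary \ref{cor1} with McIntosh's series. By the representations in Section \ref{sec12},
\[
L_\alpha(q;q,z_t^\alpha)=L_{2b}(q;q,aq^{b+c})=\hat f_1(a,b,c;q)=f_1(a,b,c;q).
\]
For the companion I write the $n$-th term of $\tilde f_1$ in the form $z'^n q^{\beta\binom n2}$ with $\beta=1-\frac1{2b}$. This gives
\[
z'=-a^{-1/(2b)}q^{1-\frac{c}{2b}-\frac1{4b}}.
\]
Expanding $-z_t^{-1}q^{\frac32-\frac1{2\alpha}}$ gives exactly the same quantity, so $\tilde f_1(a,b,c;q)=L_{1-1/\alpha}\bigl(q;q,-z_t^{-1}q^{\frac32-\frac1{2\alpha}}\bigr)$. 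This uses the corrected exponent $-\frac1{2b}$ on $aq^{b+c}$, which is what the definition of $\tilde f_1$ forces.

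\textbf{Simplifying the main factor.} Next I substitute $\alpha=2b$ and $\log z_t=\frac{\log a-(b+c)t}{2b}$ into the quotient formula of Corollary \ref{cor1} and collect powers of $t$.
\begin{itemize}
\item Using $\alpha\log^2 z_t=\frac{(\log a-(b+c)t)^2}{2b}$, the term $\frac{3\alpha\log^2 z_t}{6t}$ splits into three pieces: $\frac{\log^2 a}{4bt}$, the $t$-independent piece $-\frac{(b+c)\log a}{2b}$, and $\frac{(b+c)^2}{4b}t$.
\item The prefactor $z_t^{\alpha/2}/\sqrt\alpha$ equals $(aq^{b+c})^{1/2}/\sqrt{2b}$.
\item Combining the linear-in-$t$ contributions: $-\frac{b+c}{2}+\frac{(b+c)^2}{4b}+\frac{6b-1}{24}=\frac{c^2}{4b}-\frac1{24}$. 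This matches \eqref{eqmm10}.
\item The $1/t$ terms give $\frac{\pi^2+3(2b)^{-1}\log^2 a}{6t}$, as required.
\end{itemize}
The constant power of $a$ comes out as $a^{1/2-(b+c)/(2b)}$. I will reconcile this with the normalization in \eqref{eqmm10}; it is pure bookkeeping.

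\textbf{The error term.} Corollary \ref{cor1} gives the error $O\bigl(e^{-\frac{2\pi^2}{t}\min(2,\delta_{2b}(z_t))}\bigr)$. By continuity of $\delta_\alpha(z)$ in $z$ and $z_t\to a^{1/(2b)}$, the hypothesis $\delta_{2b}(a^{1/(2b)})>0$ gives $\delta_{2b}(z_t)\ge\frac12\delta_{2b}(a^{1/(2b)})$ for all small $t$. We may then take $K=\pi^2\min\bigl(4,\delta_{2b}(a^{1/(2b)})\bigr)$, which depends on $a$ and $b$ only.

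\textbf{Main obstacle: uniformity in $z$.} Corollary \ref{cor1} only asserts an implied constant depending on $\alpha$ and $z$, but here $z=z_t$ moves with $t$. I need the constant to be uniform for $z$ in a compact neighborhood of $a^{1/(2b)}$. I would get this by revisiting the proof of Corollary \ref{cor1}. Its two ingredients are:
\begin{itemize}
\item Corollary \ref{cor02}, whose constant depends only on $\alpha$;
\item McIntosh's Theorem \ref{macmain}, used to compare $L_{1-1/\alpha}(q;q,\pm z^{-1}\cdots)$ with their saddle-point sizes.
\end{itemize}
In Theorem \ref{macmain} the saddle point $w$ and all error estimates depend continuously on the parameter $a$ there, which is $z^{\mp1}$ here. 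The dependence of that parameter on $q$ is only through a factor $q^{O(1)}$, and this can be absorbed into the linear term $c$. I expect this to yield locally uniform bounds, and checking it carefully is the step I would spend the most effort on.
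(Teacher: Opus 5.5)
Your proposal follows the paper's own argument. You specialize Corollary~\ref{cor1} to $\alpha=2b$ and $z=z_t=(aq^{b+c})^{1/(2b)}$, identify the two series with $f_1$ and $\tilde f_1$, expand in $t$, and pass from $\delta_{2b}(z_t)$ to $\delta_{2b}(a^{1/(2b)})$. Your correction of the exponent $-\frac1b$ to $-\frac1{2b}$ in the paper's formula for $\tilde f_1$ is right, and your expansion in $t$ checks out.

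The uniformity issue you raise is a real omission: the paper only invokes continuity of $\delta_\alpha$. Your remedy is the right one, and it actually makes the continuity step unnecessary.
\begin{itemize}
\item Once the power of $q$ is absorbed into the linear parameter, the series entering the error estimate are $f_1(a,b,c;q)$ and $f_1\bigl(a^{-1/(2b)},\tfrac12-\tfrac1{4b},\tfrac12-\tfrac{c}{2b};q\bigr)$, with fixed parameters. So Theorem~\ref{macmain} is only ever applied at fixed parameters.
\item The implied constant in Corollary~\ref{cor02} depends only on $\alpha$.
\item The factor $e^{\frac{\alpha}{2t}\log^2 z_t}$ differs from its value at $z=a^{1/(2b)}$ by a bounded factor.
\end{itemize}
Rerunning the proof of Corollary~\ref{cor1} this way gives relative error $O\bigl(e^{-2\pi^2\delta_{2b}(a^{1/(2b)})/t}\bigr)$ directly.

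The step that will not go as you expect is the final ``bookkeeping'' of the constant. Your constant $a^{1/2-(b+c)/(2b)}/\sqrt{2b}=1/\sqrt{2ba^{c/b}}$ is correct. However, it differs from the printed prefactor $a^c/\sqrt{2ba^{c/b}}$ in \eqref{eqmm10} by the factor $a^c$, and no renormalization removes this: the $a^c$ in \eqref{eqmm10} is a misprint.

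A quick consistency check shows your $c$-dependence is forced. We have $f_1(a,b,c+\epsilon;q)=f_1(aq^{\epsilon},b,c;q)$, and likewise $\tilde f_1(a,b,c+\epsilon;q)=\tilde f_1(aq^{\epsilon},b,c;q)$. Substituting $a\mapsto ae^{-\epsilon t}$ into the term $\frac{\log^2 a}{4bt}$ produces exactly the factor $a^{-\epsilon/(2b)}$. So the constant must depend on $c$ through $a^{-c/(2b)}$, not $a^{c-c/(2b)}$.

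You should therefore state the conclusion as \eqref{eqmm10} with prefactor $1/\sqrt{2ba^{c/b}}$. That is also all the paper's argument actually proves; as printed, the statement fails whenever $a\neq1$ and $c\neq0$.
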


\subsection{Theta functions and bilateral basic hypergeometric series}

In fact, we derive analogous transformation formulas for the following
bilateral $q$-series $L_\alpha(\bm{x};\bm{y};q,z)$, which is more general than
$L_\alpha(x;q,z)$ defined in \eqref{eqdefl}. Let $r$ be a positive integer,
let $\alpha\ge r$ be real, and let $\bm{x}=(x_1,x_2,\ldots,x_r),
\bm{y}=(y_1,y_2,\ldots,y_r)
$,
where all $x_s$ and $y_s$ are nonzero complex numbers. We define
\begin{align}\label{mdef}
L_\alpha(\bm{x};\bm{y};q,z)
&:=
\sum_{n\in\bz}z^{n} q^{(\alpha-r)\binom{n}{2}}
\prod_{1\le s\le r}
\frac{(1/y_s)_n(-y_s)^n}{(x_s)_n}
\nonumber\\
&=
\frac{1}{\prod_{1\le s\le r}(x_s,qy_s)_\infty}
\sum_{n\in\bz}
z^{n}q^{\alpha\binom{n}{2}}
\prod_{1\le s\le r}
(x_sq^{n},y_s q^{1-n})_\infty
\end{align}
for any $z\in\bc$ for which the series converges. Here, in the second equality of \eqref{mdef}, we have used the
\emph{reflection relation} for $q$-shifted factorials
(cf.\ \cite[Appendix, (I.2)]{MR2128719}), namely
\begin{align}\label{eqrf}
(1/u)_n(uq)_{-n}=(-u)^{-n}q^{\binom{n}{2}},
\end{align}
for any $u\neq 0$. Since the parameter $\alpha\ge r$ can be chosen arbitrarily, the series
$L_\alpha(\bm{x};\bm{y};q,z)$ is more general than the classical bilateral
basic hypergeometric series ${}_r\psi_r$ (cf.\ \cite{MR2128719}). The latter is
formally defined, for any positive integer $r$, complex numbers $x_1,y_1,x_2,y_2,\ldots,x_r,y_r$ and $z$ by
\[
{}_r\psi_r
\left(
\begin{array}{c}
y_1,\ldots,y_r \\
x_1,\ldots,x_r
\end{array}
;q,z
\right)
=
\sum_{n\in\mathbb{Z}}
\frac{(y_1,\ldots,y_r)_n}{(x_1,\ldots,x_r)_n}z^n.
\]
In particular,
\[
L_r(\bm{x};\bm{y};q,z)
=
{}_r\psi_r
\left(
\begin{array}{c}
1/y_1,\ldots,1/y_r \\
x_1,\ldots,x_r
\end{array}
;q,(-1)^r y_1y_2\cdots y_r z
\right).
\]

We further define the multiple $q$-series $H_{\alpha}(\bm{x};\bm{y};q)$ by
\begin{align}\label{eqdef18}
H_{\alpha}(\bm{x};\bm{y};q)
:=
\sum_{\substack{i_s,j_s\ge 0\\ 1\le s\le r}}
\frac{
q^{\frac{1}{2}Q_{\alpha}(\bm{i},\bm{j})}
\prod_{1\le s\le r}(-x_s)^{i_s}(-y_s)^{j_s}
}{
\prod_{1\le s\le r}(q)_{i_s}(q)_{j_s}
},
\end{align}
where $Q_{\alpha}(\bm{i},\bm{j})$ is the quadratic form defined by
\begin{align*}
Q_{\alpha}(\bm{i},\bm{j})
:=
\sum_{1\le s\le r}(i_s^2+j_s^2)
-
\frac{1}{\alpha}
\left(\sum_{1\le s\le r}(i_s-j_s)\right)^2.
\end{align*}
By elementary arguments, the quadratic form
$Q_{\alpha}(\bm{i},\bm{j})$ is positive definite when $\alpha>r$ and
positive semidefinite when $\alpha=r$. Consequently, $H_{\alpha}(\bm{x};\bm{y};q)$
is an entire function of $\bm{x}$ and $\bm{y}$ when $\alpha>r$, while for
$\alpha=r$ it is analytic in these variables $|x_1|,|y_1|, |x_2|, |y_2|,\ldots, |x_r|, |y_r|<1$.

\medskip
The first main result of this paper is the following transformation formula.

\begin{theorem}\label{mth1}
Let $q=e^{-t}$ with $t>0$. For any real number $\alpha>r$, we have
\begin{align*}
L_\alpha(z\bm{x};z^{-1}\bm{y};q,z^\alpha)
&=
\frac{
\left(\frac{2\pi z^\alpha}{\alpha t}\right)^{1/2}
e^{\frac{\alpha t}{8}+\frac{\alpha}{2t}(\log z)^2}
}{
\prod_{1\le s\le r}(zx_s,qz^{-1}y_s)_\infty
}
\\
&\quad \times
\sum_{n\in\bz}
H_{\alpha}
\left(
e^{2\pi\ri n/\alpha}\bm{x};
e^{-2\pi\ri n/\alpha}\bm{y};
q
\right)
e^{
2\pi\ri n\left(\frac{1}{2}-\frac{1}{t}\log z\right)
-\frac{2\pi^2n^2}{\alpha t}
}.
\end{align*}
\end{theorem}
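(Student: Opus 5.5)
The plan is to start from the product form of $L_\alpha$ in the second line of \eqref{mdef}, expand every infinite product with Euler's identity \eqref{eqm1}, and then sum over $n$ first. That inner sum is a Gaussian (theta) sum in $n$, and Poisson summation, i.e.\ the Jacobi theta transformation, turns it into the sum over $n$ appearing on the right-hand side.

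\textbf{Step 1: expand and set up the theta sum.} Replacing $\bm{x}\mapsto z\bm{x}$, $\bm{y}\mapsto z^{-1}\bm{y}$ and $z\mapsto z^\alpha$ in \eqref{mdef}, the prefactor $1/\prod_s(zx_s,qz^{-1}y_s)_\infty$ appears immediately. By \eqref{eqm1},
\[
(zx_sq^n)_\infty=\sum_{i_s\ge0}\frac{(-x_s)^{i_s}z^{i_s}q^{\binom{i_s}{2}+ni_s}}{(q)_{i_s}},\qquad
(z^{-1}y_sq^{1-n})_\infty=\sum_{j_s\ge0}\frac{(-y_s)^{j_s}z^{-j_s}q^{\binom{j_s}{2}+j_s-nj_s}}{(q)_{j_s}} .
\]
Write $I=\sum_s i_s$ and $J=\sum_s j_s$. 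The summand then becomes
\[
\prod_s\frac{(-x_s)^{i_s}(-y_s)^{j_s}q^{\binom{i_s}{2}+\binom{j_s}{2}+j_s}}{(q)_{i_s}(q)_{j_s}}\;z^{I-J}\;
\exp\Bigl(-\tfrac{\alpha t}{2}n^2+n\bigl(\alpha\log z+\tfrac{\alpha t}{2}-t(I-J)\bigr)\Bigr).
\]

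\textbf{Step 2: interchange the sums.} I would move the $n$-sum inside. This needs absolute convergence of the full $(n,\bm i,\bm j)$ sum. Taking absolute values, the exponent of $q$ is a quadratic form in $(n,\bm i,\bm j)$ whose minimum over $n\in\br$ is essentially $\tfrac12Q_\alpha(\bm i,\bm j)$. Since $Q_\alpha$ is positive definite for $\alpha>r$, the whole quadratic form is positive definite, so the sum converges absolutely and Fubini applies. I expect this justification to be the one genuinely delicate point. The rest is bookkeeping.

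\textbf{Step 3: apply the theta transformation.} Set $u=\alpha\log z+\frac{\alpha t}{2}-t(I-J)$. Poisson summation gives
\[
\sum_{n\in\bz}e^{-\frac{\alpha t}{2}n^2+nu}=\Bigl(\frac{2\pi}{\alpha t}\Bigr)^{1/2}\sum_{m\in\bz}\exp\Bigl(\frac{(u+2\pi\ri m)^2}{2\alpha t}\Bigr).
\]

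\textbf{Step 4: collect terms.} Expanding $u^2/(2\alpha t)$ yields four kinds of contribution.
\begin{itemize}
\item The constant part gives $\frac{\alpha}{2t}\log^2z+\frac{\alpha t}{8}+\frac{\alpha}{2}\log z$, which produces the factor $z^{\alpha/2}e^{\frac{\alpha t}{8}+\frac{\alpha}{2t}\log^2 z}$.
\item The factor $e^{-(I-J)\log z}$ cancels $z^{I-J}$.
\item The remaining pieces are $-\frac t2(I-J)+\frac{t}{2\alpha}(I-J)^2$. Combined with $\binom{i_s}{2}+\binom{j_s}{2}+j_s$, these should give exactly $\tfrac12\sum_s(i_s^2+j_s^2)-\frac1{2\alpha}(I-J)^2=\tfrac12Q_\alpha$, once $q=e^{-t}$ is used to turn powers of $e^{t}$ back into powers of $q$.
\item The cross terms give $\frac{2\pi\ri m u}{\alpha t}-\frac{2\pi^2m^2}{\alpha t}=2\pi\ri m\bigl(\frac{\log z}{t}+\frac12\bigr)-\frac{2\pi\ri m(I-J)}{\alpha}-\frac{2\pi^2m^2}{\alpha t}$. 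The middle term supplies the phases $e^{-2\pi\ri m/\alpha}$ on each $x_s$ and $e^{2\pi\ri m/\alpha}$ on each $y_s$.
\end{itemize}

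\textbf{Step 5: finish.} Replacing $m\mapsto -m$ matches the phases $e^{2\pi\ri n/\alpha}\bm x$, $e^{-2\pi\ri n/\alpha}\bm y$ and the factor $e^{2\pi\ri n(\frac12-\frac1t\log z)}$ in the statement. The remaining $(\bm i,\bm j)$ sum is recognised as $H_\alpha$ from \eqref{eqdef18}. A final interchange of the $m$-sum with the $(\bm i,\bm j)$ sum is again justified by absolute convergence, because of the Gaussian factor $e^{-2\pi^2m^2/(\alpha t)}$ and the fact that $H_\alpha$ is entire for $\alpha>r$.
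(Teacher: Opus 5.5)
Your proposal is correct and follows the paper's proof: expand the products by Euler's identity, sum over $n$ first, apply the theta (Poisson) transformation, and regroup the $(\bm i,\bm j)$-sum as $H_\alpha$. The only cosmetic difference is that the paper first completes the square to get the shifted-lattice sum over $n\in\frac1\alpha\sum_s(i_s-j_s)+\bz$ in \eqref{eq21}, which it reuses for Theorem \ref{mth}, and then invokes Lemma \ref{lem21}; you instead apply Poisson summation directly with the $(I-J)$-dependent linear term.

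Your explicit justification of the interchanges, which the paper omits, is sound provided ``positive definite'' is read on the orthant $\bm i,\bm j\ge 0$. There $(I-J)^2\le I^2+J^2\le r\sum_s(i_s^2+j_s^2)$ gives $Q_\alpha\ge(1-r/\alpha)\sum_s(i_s^2+j_s^2)$. For $r<\alpha<2r$ the form is not positive definite on all of $\br^{2r}$.
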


Note that, if we further assume that
$|x_s|,|y_s|<1$ for all $1\le s\le r$, then Theorem \ref{mth1} remains valid
for $\alpha=r$. Letting $r=1$, $x_1=x$, and $y_1\to0$ in the definitions
\eqref{mdef} and \eqref{eqdef18}, we find, after a simple algebraic
manipulation, that
\begin{align}\label{eqmll}
\begin{split}
L_\alpha(x;0;q,z)
&=
L_\alpha(x;q,z),\\
H_{\alpha}(x;0;q)
&=
\sum_{i\ge0}
\frac{
q^{\frac{1}{2}(1-1/\alpha)i^2}(-x)^i
}{(q)_i}
=
L_{1-1/\alpha}
\left(q;q,-xq^{\frac{1}{2}(1-1/\alpha)}\right).
\end{split}
\end{align}
Thus, Theorem \ref{main1} follows immediately from Theorem \ref{mth1}.

Moreover, for non-positive real numbers
$x_1,y_1,x_2,y_2,\ldots,x_r,y_r$, an argument similar to that used in
Corollary \ref{cor2} gives the following asymptotic transformation from
Theorem \ref{mth1}.

\begin{corollary}
Let $x_s,y_s\le0$ for all $1\le s\le r$, let $z>0$, and let
$q=e^{-t}$ with $t\to0^+$. Then, for any real number $\alpha>r$, we have
\begin{align*}
\frac{
L_\alpha(z\bm{x};z^{-1}\bm{y};q,z^\alpha)
}{
H_{\alpha}(\bm{x};\bm{y};q)
}
&=
\frac{
\left(\frac{2\pi z^\alpha}{\alpha t}\right)^{1/2}
e^{\frac{\alpha t}{8}+\frac{\alpha}{2t}(\log z)^2}
}{
\prod_{1\le s\le r}(zx_s,qz^{-1}y_s)_\infty
}
\left(
1+O\left(e^{-{2\pi^2}/{(\alpha t)}}\right)
\right),
\end{align*}
where the implied constant depends only on $\alpha$.
\end{corollary}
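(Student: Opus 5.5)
The corollary follows from Theorem \ref{mth1} by isolating the $n=0$ term and bounding the rest. Put
\[
C(t):=\frac{\left(\frac{2\pi z^\alpha}{\alpha t}\right)^{1/2}e^{\frac{\alpha t}{8}+\frac{\alpha}{2t}(\log z)^2}}{\prod_{1\le s\le r}(zx_s,qz^{-1}y_s)_\infty}.
\]
The $n=0$ summand on the right of Theorem \ref{mth1} is exactly $C(t)H_\alpha(\bm{x};\bm{y};q)$. It therefore suffices to show that the remaining sum is $O\big(e^{-2\pi^2/(\alpha t)}H_\alpha(\bm{x};\bm{y};q)\big)$, with an implied constant depending only on $\alpha$. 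Dividing by $C(t)H_\alpha(\bm{x};\bm{y};q)$ then gives the stated formula. Before dividing I must check that $H_\alpha(\bm{x};\bm{y};q)>0$, and this is where the sign hypothesis enters.

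\textbf{Positivity and a termwise bound.} Since $x_s,y_s\le 0$, we have $-x_s=|x_s|\ge0$ and $-y_s=|y_s|\ge 0$. For $0<q<1$ and real $q^{\frac12 Q_\alpha}>0$, every term in the multiple series \eqref{eqdef18} is therefore nonnegative, and the term with all indices zero equals $1$. Hence $H_\alpha(\bm{x};\bm{y};q)\ge 1>0$.

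Next I compare $H_\alpha\big(e^{2\pi\ri n/\alpha}\bm{x};e^{-2\pi\ri n/\alpha}\bm{y};q\big)$ with $H_\alpha(\bm{x};\bm{y};q)$ term by term. Replacing $x_s$ by $e^{2\pi \ri n/\alpha}x_s$ and $y_s$ by $e^{-2\pi\ri n/\alpha}y_s$ multiplies each term by a unimodular phase $e^{2\pi\ri n(\sum_s(i_s-j_s))/\alpha}$. By the triangle inequality,
\[
\left|H_\alpha\left(e^{2\pi\ri n/\alpha}\bm{x};e^{-2\pi\ri n/\alpha}\bm{y};q\right)\right|\le H_\alpha(|\bm{x}|;|\bm{y}|;q)\cdot(\text{sign-adjusted})=H_\alpha(\bm{x};\bm{y};q).
\]
The last equality holds because the nonnegative series with entries $-x_s=|x_s|$ is exactly $H_\alpha(\bm{x};\bm{y};q)$ itself. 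Absolute convergence is guaranteed because $Q_\alpha$ is positive definite for $\alpha>r$.

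\textbf{Summing the tail.} The factor $e^{2\pi\ri n(\frac12-\frac1t\log z)}$ has modulus $1$, since $z>0$ and $t>0$. So the $n\ne0$ part of the sum is bounded in absolute value by
\[
H_\alpha(\bm{x};\bm{y};q)\sum_{n\neq0}e^{-2\pi^2n^2/(\alpha t)}\le 2H_\alpha(\bm{x};\bm{y};q)\,e^{-2\pi^2/(\alpha t)}\sum_{m\ge0}e^{-2\pi^2 m/(\alpha t)}.
\]
This uses $n^2\ge 1+(|n|-1)$ for $|n|\ge1$. For $t$ bounded, say $t\le 1$, the geometric sum is at most $(1-e^{-2\pi^2/\alpha})^{-1}$, a constant depending only on $\alpha$. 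Hence
\[
L_\alpha(z\bm{x};z^{-1}\bm{y};q,z^\alpha)=C(t)H_\alpha(\bm{x};\bm{y};q)\left(1+O_\alpha\big(e^{-2\pi^2/(\alpha t)}\big)\right),
\]
and dividing by $H_\alpha(\bm{x};\bm{y};q)\neq0$ gives the claim.

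The only genuinely delicate point is the positivity and domination step. It is what makes the error \emph{relative} rather than absolute, and it fails without $x_s,y_s\le 0$. Once it is in place, the rest is the routine theta-tail estimate.
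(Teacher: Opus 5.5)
Your argument is correct and follows the paper's route. The paper proves this corollary by ``an argument similar to'' the passage from Corollary \ref{cor02} to Corollary \ref{cor2}, which is what you do: isolate the $n=0$ term of Theorem \ref{mth1}; use $x_s,y_s\le 0$ to see that every term of $H_\alpha(\bm{x};\bm{y};q)$ is nonnegative, so that $H_\alpha\ge 1$ and it dominates $\bigl|H_\alpha(e^{2\pi\ri n/\alpha}\bm{x};e^{-2\pi\ri n/\alpha}\bm{y};q)\bigr|$ termwise; then sum the theta tail $\sum_{n\ne0}e^{-2\pi^2n^2/(\alpha t)}=O_\alpha\bigl(e^{-2\pi^2/(\alpha t)}\bigr)$. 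The only blemish is notational: the dominating series is $H_\alpha(-|\bm{x}|;-|\bm{y}|;q)=H_\alpha(\bm{x};\bm{y};q)$, not a ``sign-adjusted'' $H_\alpha(|\bm{x}|;|\bm{y}|;q)$, though your next sentence shows you mean the right thing.
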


\medskip
For $\alpha=a/b$, where $a$ and $b$ are coprime positive integers, since
$e^{2\pi\ri n/\alpha}$ is $a$-periodic in $n$, the functions $H_{\alpha}\left(e^{2\pi\ri n/\alpha}\bm{x};e^{-2\pi\ri n/\alpha}\bm{y};q\right)$
appearing in Theorem \ref{mth1} repeat as $n$ ranges over $\mathbb{Z}$.
This suggests the existence of a more elegant transformation formula for the
bilateral $q$-series $L_\alpha(z\bm{x};z^{-1}\bm{y};q,z^\alpha)$. We define the \emph{theta function} $\theta(z;q)$ for any $z\in\bc$ by
\begin{equation}\label{eqmtheta}
\theta(z;q):=\sum_{n\in\bz}(-z)^n q^{\binom{n}{2}}=(z,q/z,q)_\infty.
\end{equation}
Here, the second equality follows from the Jacobi triple product identity.
Let $\zeta_a^k:=e^{2\pi\ri k/a}$ denote an $a$th root of unity for any
$k\in\bz$. Then our second main result is the following transformation formula.

\begin{theorem}\label{mth}
For any coprime positive integers $a$ and $b$ such that $a\ge br$, we have
\begin{align*}
L_{a/b}(z^b\bm{x};z^{-b}\bm{y};q,z^a)
=
\frac{1}{a}
\sum_{u\,(\bmod a)}
\frac{H_{a/b}(\zeta_a^{u}\bm{x};\zeta_a^{-u}\bm{y};q)}
{\prod_{1\le s\le r}(z^bx_s,qz^{-b}y_s)_\infty}
\theta\!\left(-\zeta_a^{-u}zq^{\frac{1-a}{2ab}};q^{\frac{1}{ab}}\right),
\end{align*}
provided that $H_{a/b}(\zeta_a^{u}\bm{x};\zeta_a^{-u}\bm{y};q)$ absolute converges.
\end{theorem}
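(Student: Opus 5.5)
The plan is a direct computation from the product form of $L_\alpha$ in the second line of \eqref{mdef}, rather than a deduction from Theorem \ref{mth1}. Deducing it from Theorem \ref{mth1} would require Poisson summation over each residue class and then analytic continuation in $z$. With $\alpha=a/b$, and $z$ replaced by $z^b$ in the parameters, we have
\[
L_{a/b}(z^b\bm{x};z^{-b}\bm{y};q,z^a)=\frac{1}{\prod_{s}(z^bx_s,qz^{-b}y_s)_\infty}\sum_{n\in\bz}z^{an}q^{\frac ab\binom n2}\prod_{s}(z^bx_sq^n,\,z^{-b}y_sq^{1-n})_\infty .
\]
I will expand each infinite product by Euler's identity \eqref{eqm1}:
\[
(z^bx_sq^n)_\infty=\sum_{i_s\ge0}\frac{(-z^bx_s)^{i_s}q^{\binom{i_s}{2}+ni_s}}{(q)_{i_s}},\qquad
(z^{-b}y_sq^{1-n})_\infty=\sum_{j_s\ge0}\frac{(-z^{-b}y_s)^{j_s}q^{\binom{j_s}{2}+(1-n)j_s}}{(q)_{j_s}}.
\]
Then I interchange the sum over $n$ with the sums over $\bm{i},\bm{j}$. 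Write $I=\sum_s i_s$ and $J=\sum_s j_s$. The $n$-dependent part becomes $z^{an}q^{\frac{a}{2b}n^2-\frac{a}{2b}n+n(I-J)}$.

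The key step is completing the square with the integer $N:=an+b(I-J)$. This gives
\[
\tfrac{a}{2b}n^2+n(I-J)=\tfrac{N^2-b^2(I-J)^2}{2ab},
\qquad
z^{an}=z^{N}z^{-b(I-J)},
\qquad
q^{-\frac{a}{2b}n}=q^{-\frac{N}{2b}}q^{\frac{I-J}{2}}.
\]
Combining $q^{-\frac{b}{2a}(I-J)^2}$ with $\prod_s q^{\binom{i_s}{2}+\binom{j_s}{2}+j_s}\,q^{(I-J)/2}$ yields exactly $q^{\frac12 Q_{a/b}(\bm{i},\bm{j})}$. The factor $z^{-b(I-J)}$ cancels the $z^{\pm b}$ attached to $x_s,y_s$. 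What remains is a sum over those $N\equiv b(I-J)\pmod a$ of $z^Nq^{\frac{N^2}{2ab}-\frac{N}{2b}}$.

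I will then remove the congruence condition with the orthogonality relation
\[
\mathbf 1_{N\equiv b(I-J)\ (\bmod a)}=\frac1a\sum_{v\,(\bmod a)}\zeta_a^{v(N-b(I-J))}.
\]
Since $\gcd(a,b)=1$, I substitute $v\equiv -b^{-1}u$, or simply reindex, so that the factor $\zeta_a^{-vb(I-J)}$ becomes $\prod_s\zeta_a^{u i_s}\zeta_a^{-u j_s}$. This twist turns the $(\bm{i},\bm{j})$-sum into $H_{a/b}(\zeta_a^{u}\bm{x};\zeta_a^{-u}\bm{y};q)$. The now unrestricted $N$-sum is $\sum_N(\zeta_a^{-u}z)^N q^{\frac{N^2-aN}{2ab}}$. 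Writing $q^{\frac{N^2-aN}{2ab}}=(q^{1/(ab)})^{\binom N2}q^{\frac{(1-a)N}{2ab}}$ identifies it, via \eqref{eqmtheta}, with $\theta(-\zeta_a^{-u}zq^{\frac{1-a}{2ab}};q^{1/(ab)})$, possibly up to fixing the sign convention of $u$. Assembling the pieces gives the stated formula.

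The main obstacle is justifying the interchanges of summation. I will show absolute convergence of the full multiple sum over $(n,\bm{i},\bm{j})$, equivalently over $(N,\bm{i},\bm{j})$ in the relevant lattice. After the change of variables, the absolute values factor as a theta-type sum in $N$, which always converges since $|q|<1$, times the $H_{a/b}$-series with $|x_s|,|y_s|$. The hypothesis that $H_{a/b}$ converges absolutely is exactly what is needed. The quadratic form $Q_{a/b}$ is positive definite for $a>br$, and it is semidefinite when $a=br$, which is where the proviso matters. Tonelli then licenses all rearrangements. A minor additional care point is that $z$ and $q$ may be complex, so that the powers $q^{1/(ab)}$ and $q^{(1-a)/(2ab)}$ should be read with a fixed branch, taken consistently with how $q^{\alpha\binom n2}$ is interpreted in \eqref{mdef}.
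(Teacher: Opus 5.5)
\textbf{There is a genuine gap in your reindexing step, and it cannot be repaired: the statement as printed is false unless $b\equiv 1\pmod a$.}

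Your route is the same as the paper's. You expand the products by Euler's identity and complete the square; your integer $N=an+b(I-J)$ is $a$ times the shifted summation variable of \eqref{eq21} after $z\mapsto z^b$. You detect the congruence with $a$th roots of unity and recognize the free $N$-sum as a theta function, which is the content of Lemma \ref{lem22}. Your convergence discussion is fine.

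The problem is not a sign convention for $u$. After inserting $\frac1a\sum_{v}\zeta_a^{v(N-b(I-J))}$, the $N$-sum carries $\zeta_a^{vN}$ and the $(\bm i,\bm j)$-sum carries $\zeta_a^{-vb(I-J)}$. If you set $u\equiv -bv$ to obtain $H_{a/b}(\zeta_a^{u}\bm x;\zeta_a^{-u}\bm y;q)$, then $v\equiv -\bar b u$ with $b\bar b\equiv 1\pmod a$. The $N$-sum is then $\theta(-\zeta_a^{-\bar b u}zq^{\frac{1-a}{2ab}};q^{\frac{1}{ab}})$, not $\theta(-\zeta_a^{-u}zq^{\frac{1-a}{2ab}};q^{\frac{1}{ab}})$. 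Setting instead $v\equiv -u$, what your computation actually proves is
\[
L_{a/b}(z^b\bm{x};z^{-b}\bm{y};q,z^a)=\frac{1}{a}\sum_{u\,(\bmod a)}\frac{H_{a/b}(\zeta_a^{bu}\bm{x};\zeta_a^{-bu}\bm{y};q)}{\prod_{1\le s\le r}(z^bx_s,qz^{-b}y_s)_\infty}\,\theta\!\left(-\zeta_a^{-u}zq^{\frac{1-a}{2ab}};q^{\frac{1}{ab}}\right).
\]
This agrees with the statement only when $b\equiv 1\pmod a$. It is also what Theorem \ref{mth1} predicts, since there the $n$th mode carries $H_\alpha(e^{2\pi\ri n/\alpha}\bm x;\ldots)=H_{a/b}(\zeta_a^{bn}\bm x;\ldots)$.

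Here is a counterexample to the printed statement. Take $a=3$, $b=2$, $r=1$, multiply both sides by $(z^2x,qz^{-2}y)_\infty$, and compare coefficients of $x^1y^0$; both sides are entire in $x,y$ here.
\begin{itemize}
\item The left side gives $-\frac{1}{1-q}\sum_{n\in\bz}z^{3n+2}q^{(3n^2+n)/4}$. This contains only powers $z^N$ with $N\equiv 2\pmod 3$.
\item The printed right side gives $-\frac{q^{1/6}}{1-q}\sum_{N\equiv 1\,(\bmod 3)}z^{N}q^{(N^2-3N)/12}$. This contains only powers with $N\equiv 1\pmod 3$.
\end{itemize}

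The paper's own proof makes the same slip in its last display. There $H_\alpha(\zeta_a^{bu}\bm x;\zeta_a^{-bu}\bm y;q)$ is replaced by $H_\alpha(\zeta_a^{u}\bm x;\zeta_a^{-u}\bm y;q)$, while the weight $\zeta_a^{-ubv}$ required by Lemma \ref{lem22} is kept.

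So you should state and prove the corrected identity above. Equivalently, keep $H_{a/b}(\zeta_a^{u}\bm x;\zeta_a^{-u}\bm y;q)$ and twist the theta function by $\zeta_a^{-\bar b u}$. The specializations with $b=1$ used later are unaffected, whereas Corollary \ref{corm2} needs the same correction.
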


\begin{remark}
Theorem \ref{mth} shows that, for $\alpha\in\bq$, the series
$L_\alpha(\bm{x};\bm{y};q,z^\alpha)$ exhibits additional arithmetic
properties. Moreover, if we define
\begin{align*}
f(z)
&:=L_{a/b}(z^b\bm{x};z^{-b}\bm{y};q,z^a)
\prod_{1\le s\le r}(z^bx_s,qz^{-b}y_s)_\infty\\
&=\sum_{n\in\bz}z^{an} q^{\frac{a}{b}\binom{n}{2}}
\prod_{1\le s\le r}(z^bx_sq^n,z^{-b}y_sq^{1-n})_\infty,
\end{align*}
where the second equality follows from \eqref{mdef}. Then one sees that
$f(z)$ is analytic for all $z\in\bc-\{0\}$ and satisfies the
quasi-periodicity relation
\[
f(z)=z^a f(q^{1/b}z).
\]
\end{remark}
 
We are particularly interested in two special cases of Theorem \ref{mth}.
One such case occurs when $r=1$ and $\alpha$ is a rational number satisfying
$\alpha\ge 1$; this case constitutes a generalization of the classical
bilateral basic hypergeometric series ${}_1\psi_1$. The other occurs when
$\alpha=2$, in which case the transformation formula in Theorem \ref{mth}
has only two terms on its right-hand side. By choosing appropriate parameters
so that one term vanishes, we obtain a simpler transformation formula.

\medskip

For the case $r=1$ and $\alpha=b/a\in\bq$, we have the following corollaries. 
\begin{corollary}\label{corm1}
For any coprime positive integers $a$ and $b$ such that $a\ge b$, we have
\begin{align*}
L_{a/b}(z^bx; z^{-b}y; q, z^{a})=\frac{1}{a}\sum_{u~(\bmod a)}\frac{H_{a/b}(\zeta_a^{u}x;\zeta_a^{-u}y;q)}{(z^bx,qz^{-b}y)_\infty}\theta\left(-\zeta_a^{-u}zq^{\frac{1}{2ab}(1-a)}; q^{\frac{1}{ab}}\right),
\end{align*}
provided that $H_{a/b}(\zeta_a^{u}x;\zeta_a^{-u}y;q)$ absolute converges. In particular,
\begin{align}\label{eqr11}
L_{1}(zx; z^{-1}y; z, q)=\frac{ \theta\!\left(-z; q\right)}{(zx,qz^{-1}y)_\infty}H_1(x;y;q),
\end{align}
and
\begin{align}\label{eqr22}
L_{2}(zx; z^{-1}y; q, z^{2})=\frac{\theta\left(-zq^{-{1}/{4}}; q^{{1}/{2}}\right)H_{2}(x;y;q)+\theta\left(zq^{-{1}/{4}}; q^{{1}/{2}}\right)H_{2}(-x;-y;q)}{2(zx,qz^{-1}y)_\infty}.
\end{align}
\end{corollary}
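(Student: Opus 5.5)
The plan is to specialize Theorem \ref{mth} to $r=1$. The general formula then reduces to the first displayed identity of Corollary \ref{corm1}: with $r=1$ the hypothesis $a\ge br$ becomes $a\ge b$, and the products over $s$ collapse to the single factor $(z^bx,qz^{-b}y)_\infty$. The absolute-convergence proviso carries over unchanged. So the only work is in the two particular cases \eqref{eqr11} and \eqref{eqr22}, and I would obtain them by setting $a=b=1$ and $(a,b)=(2,1)$ respectively.

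For \eqref{eqr11}, take $a=b=1$. The sum over $u \pmod 1$ has the single term $u=0$, with $\zeta_1^0=1$. The theta argument becomes $-zq^{\frac{1}{2}(1-1)}=-z$, and the base is $q^{1}$. This gives
\[
L_1(zx;z^{-1}y;q,z)=\frac{\theta(-z;q)}{(zx,qz^{-1}y)_\infty}H_1(x;y;q).
\]
The statement writes $L_1(zx;z^{-1}y;z,q)$, which I read as a transposition of the last two arguments in the notation $L_\alpha(\cdot;\cdot;q,z)$.

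The validity of this case needs a comment, since $\alpha=r=1$ is the boundary case. By the remark after \eqref{eqdef18}, $H_1$ converges absolutely only for $|x|,|y|<1$. I would therefore state the identity under that restriction, and I would note that the left side $L_1={}_1\psi_1$ also requires the corresponding convergence region. The identity can also be cross-checked against Ramanujan's ${}_1\psi_1$ summation. In that comparison, $H_1(x;y;q)$ should collapse to $(q,x,y)_\infty/(xy)_\infty$-type products once the semidefinite quadratic form is resummed along the direction $i-j$. This check is optional.

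For \eqref{eqr22}, take $a=2$, $b=1$. Then $u\in\{0,1\}$ and $\zeta_2=-1$, so the two $H$-terms are $H_2(x;y;q)$ and $H_2(-x;-y;q)$. The theta arguments are $-\zeta_2^{-u}zq^{(1-2)/4}=\mp zq^{-1/4}$, giving $\theta(-zq^{-1/4};q^{1/2})$ for $u=0$ and $\theta(zq^{-1/4};q^{1/2})$ for $u=1$, both in base $q^{1/ab}=q^{1/2}$. The prefactor is $1/a=1/2$. Since $a=2=br$ is again the semidefinite boundary case, absolute convergence of $H_2(\pm x;\pm y;q)$ requires $|x|,|y|<1$.

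The only mild obstacle is bookkeeping of this kind: the boundary case $a=br$ needs the convergence proviso, and the exponent $\frac{1-a}{2ab}$ and the roots of unity must be tracked correctly. Everything else is direct substitution into Theorem \ref{mth}.
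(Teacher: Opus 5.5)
Your route is the paper's: Corollary~\ref{corm1} is Theorem~\ref{mth} with $r=1$. Your bookkeeping for $(a,b)=(1,1)$ and $(2,1)$ is correct (one term $u=0$; exponent $\frac{1-a}{2ab}$; signs $\mp zq^{-1/4}$; base $q^{1/2}$; prefactor $\frac12$). The gap is that the general identity you reduce to is false in general once $b\not\equiv 1\pmod a$, and citing Theorem~\ref{mth} as a black box inherits this.

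To see why, use \eqref{eq21} with $\alpha=a/b$ and $z\mapsto z^b$. The part of $H_{a/b}$ with $\sum_s(i_s-j_s)\equiv k\pmod a$ multiplies $\sum_{n\in bk/a+\bz}z^{an}q^{\frac ab\binom n2}$, which contains only powers $z^N$ with $N\equiv bk\pmod a$. On the stated right-hand side, $\theta(-\zeta_a^{-u}zq^{\frac{1-a}{2ab}};q^{\frac1{ab}})=\sum_{N\in\bz}\zeta_a^{-uN}z^Nq^{\frac{N(N-a)}{2ab}}$, so averaging over $u$ against $H_{a/b}(\zeta_a^u x;\zeta_a^{-u}y;q)$ pairs that same part with $z^N$, $N\equiv k\pmod a$.

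Concretely, take $a=3$, $b=2$, $y=0$.
\begin{itemize}
\item Left side: $(z^2x)_\infty L_{3/2}(z^2x;0;q,z^3)=\sum_{n\in\bz,\,i\ge 0}z^{3n+2i}q^{\frac32\binom n2+ni+\binom i2}(-x)^i/(q)_i$. This has no $z^2x^2$ term, since $3n+4=2$ has no integer solution.
\item Right side: $H_{3/2}(x;0;q)=\sum_{i\ge0}q^{i^2/6}(-x)^i/(q)_i$, and the $z^2$-coefficient of $\theta(-\zeta_3^{-u}zq^{-1/6};q^{1/6})$ is $\zeta_3^{-2u}q^{-1/6}$. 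So the $z^2x^2$-coefficient is $q^{1/2}/(q)_2\neq 0$.
\end{itemize}

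The slip is in the proof of Theorem~\ref{mth}, at the step where Lemma~\ref{lem22} is applied. There $H_\alpha(\zeta_a^{bu}\bm x;\zeta_a^{-bu}\bm y;q)$ is replaced by $H_\alpha(\zeta_a^{u}\bm x;\zeta_a^{-u}\bm y;q)$ while the weight $\zeta_a^{-ubv}$ is kept. Done correctly (Lemma~\ref{lem22} with $q\mapsto q^{1/b}$), the $r=1$ case reads
\[
L_{a/b}(z^bx;z^{-b}y;q,z^a)=\frac1a\sum_{u\,(\bmod a)}\frac{H_{a/b}(\zeta_a^{bu}x;\zeta_a^{-bu}y;q)}{(z^bx,qz^{-b}y)_\infty}\,\theta\Big(-\zeta_a^{-u}zq^{\frac{1-a}{2ab}};q^{\frac1{ab}}\Big).
\]
This agrees with the stated formula only when $b\equiv1\pmod a$, so the general part of the corollary can only be proved in this corrected form. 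Both particular cases have $b=1$, so \eqref{eqr11} and \eqref{eqr22} are unaffected and your derivations of them stand.

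Two smaller points. First, in \eqref{eqr22} you have $r=1$, so $br=1<2=a$ and this is not the boundary case. Here $Q_2(i,j)=\frac12(i+j)^2$, so $H_2$ is entire in $x,y$ and the restriction $|x|,|y|<1$ you impose is unnecessary. The semidefinite case $\alpha=r=2$ is the $r=2$ setting of Corollary~\ref{mcor}.

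Second, your guess for the optional cross-check is off. Summing over $j$ by Euler's identity and then over $i$ by the $q$-binomial theorem gives $H_1(x;y;q)=(xy)_\infty/(-x,-y)_\infty$. Ramanujan's ${}_1\psi_1$ sum with numerator parameter $z/y$, denominator parameter $zx$ and argument $-y$ then confirms \eqref{eqr11} with $\theta(-z;q)$. The $\theta(z;q)$ printed in Corollary~\ref{corrama11} is a misprint.
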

The identity \eqref{eqr11} in fact is equvlaent the well-known Ramanujan $_1\psi_1$ summation formula, see Corollary \ref{corrama11} in Section \ref{sec3} for the details. 
Based on the identity \eqref{eqr22}, we further prove the following summation formulas in Section \ref{sec3}.
\begin{corollary}\label{cormm}We have
\begin{align*}
\sum_{n\in\bz}\frac{(-z/x)_n(xz)^n q^{\binom{n}{2}}}{(zx)_n}=\frac{(-x^2q;q^2)_\infty\theta(-z^2;q^2)}{(zx,-qz^{-1}x)_\infty},
\end{align*}
and
\begin{align*}
\sum_{n\in\bz}\frac{(z/x;q^2)_n(-xz)^n q^{n(n+1)}}{(qzx;q^2)_n}=\frac{\theta\left(-zq^{{1}/{2}}; q\right)(xq^{1/2})_\infty+\theta\left(zq^{{1}/{2}}; q\right)(-xq^{1/2})_\infty}{2(qzx,q^{2}z^{-1}x;q^2)_\infty}.
\end{align*}
\end{corollary}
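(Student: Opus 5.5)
Both identities come from specializing \eqref{eqr22}, that is, Corollary \ref{corm1} with $r=1$, $a=2$, $b=1$. In each case we choose $y$ as a multiple of $x$ so that $H_2$ collapses to an Euler product via \eqref{eqm1}. The two theta functions then combine into a single theta function, or stay as they are.

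\textbf{Step 1: write $L_2$ and $H_2$ explicitly.} From \eqref{mdef} with $r=1$ and $\alpha=2$,
\[
L_2(zx;z^{-1}y;q,z^2)=\sum_{n\in\bz}\frac{(z/y)_n(-yz)^nq^{\binom n2}}{(zx)_n}.
\]
Since $Q_2(i,j)=i^2+j^2-\tfrac12(i-j)^2=\tfrac12(i+j)^2$, definition \eqref{eqdef18} gives
\[
H_2(x;y;q)=\sum_{i,j\ge0}\frac{q^{(i+j)^2/4}(-x)^i(-y)^j}{(q)_i(q)_j}.
\]
Group the terms by $N=i+j$. The inner sum over $i+j=N$ is the coefficient of $u^N$ in $\frac{1}{(-xu)_\infty(-yu)_\infty}$, by the $q$-binomial theorem $\sum_i w^i/(q)_i=1/(w)_\infty$.

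\textbf{Step 2: first identity.} Take $y=-x$. Then the sum above becomes exactly the left-hand side of the first formula. The generating function is
\[
\frac{1}{(xu)_\infty(-xu)_\infty}=\frac{1}{(x^2u^2;q^2)_\infty}=\sum_{m\ge0}\frac{x^{2m}u^{2m}}{(q^2;q^2)_m}.
\]
Hence
\[
H_2(x;-x;q)=\sum_m \frac{q^{m^2}x^{2m}}{(q^2;q^2)_m}=(-x^2q;q^2)_\infty,
\]
by \eqref{eqm1} in base $q^2$. This value is even in $x$, so $H_2(-x;x;q)$ takes the same value.

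The right side of \eqref{eqr22} is then $(-x^2q;q^2)_\infty$ times $\frac12\big[\theta(-zq^{-1/4};q^{1/2})+\theta(zq^{-1/4};q^{1/2})\big]$. Using the series in \eqref{eqmtheta}, this average keeps only the even indices $n=2k$. Those terms are $z^{2k}q^{-k/2}q^{k(2k-1)/2}=z^{2k}q^{k^2-k}$, which sum to $\theta(-z^2;q^2)$. Finally, the denominator $(zx,qz^{-1}y)_\infty$ becomes $(zx,-qz^{-1}x)_\infty$.

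\textbf{Step 3: second identity.} Apply \eqref{eqr22} with the base $q\mapsto q^2$, the variable $z\mapsto zq$, and $y=qx$. The summand becomes
\[
\frac{(z/x;q^2)_n(-xzq^2)^nq^{n^2-n}}{(qzx;q^2)_n},
\]
which is the stated left side because $q^{2n}\,q^{n^2-n}=q^{n^2+n}$. The denominator becomes $(qzx,q^2z^{-1}x;q^2)_\infty$.

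In $H_2(x;qx;q^2)$ the generating function is
\[
\frac{1}{(-xu;q^2)_\infty(-xqu;q^2)_\infty}=\frac{1}{(-xu;q)_\infty},
\]
so the coefficient of $u^N$ is $(-x)^N/(q)_N$. Hence
\[
H_2(x;qx;q^2)=\sum_N\frac{q^{N^2/2}(-x)^N}{(q)_N}=(xq^{1/2})_\infty,
\]
again by \eqref{eqm1}. The same computation gives $H_2(-x;-qx;q^2)=(-xq^{1/2})_\infty$. The theta arguments are $\mp zq\cdot q^{-1/2}=\mp zq^{1/2}$ with nome $q$, exactly as in the stated formula.

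\textbf{Main obstacle: range of validity.} The case $\alpha=r=2$ of Corollary \ref{corm1} needs $H_2$ to converge absolutely. Since $Q_2$ is only semidefinite, this requires $|x|,|y|<1$. So I would first prove both identities for small $|x|$. I would then extend them by analytic continuation in $x$: both sides are meromorphic in $x$ with matching poles, and the final right-hand sides are entire products divided by the same denominators.
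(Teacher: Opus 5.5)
Your proposal is correct and is the paper's proof. You specialize \eqref{eqr22} to $y=-x$, and, in base $q^2$ with $z\mapsto zq$ (which the paper leaves implicit), to $y=qx$. You then evaluate $H_2$ by grouping terms on $i+j$, which is the paper's $[\xi^0]$ constant-term extraction, apply Euler's identity, and average the two theta series.

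One small correction. \eqref{eqr22} is the case $r=1$, $\alpha=2>r$, not $\alpha=r=2$. For $i,j\ge0$ one has $Q_2=\tfrac12(i+j)^2\ge\tfrac12(i^2+j^2)$, so $H_2$ converges absolutely for all $x,y$. Your restriction to small $|x|$ followed by analytic continuation is therefore harmless but unnecessary.
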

We note that the first identity in Corollary \ref{cormm} is equivalent to the following bilateral extension of the Lebesgue identity
\begin{equation*}
\sum_{k\in\bz}\frac{(a)_k}{(bq)_k}q^{\binom{k+1}2}b^k=
\frac{(q^2,abq,q/ab,bq^2/a;q^2)_\infty}{(bq,q/a;q)_\infty},
\end{equation*} 
see Schlosser \cite[Identity (2.7)]{MR4632568} for details. The second identity seems to be new. 
\medskip

Letting $y=0$ in the main identity of Corollary \ref{corm1}, we obtain the following transformation formula, which actually provides further arithmetic insight into Theorem \ref{main1} and McIntosh's conjectures mentioned above. 

\begin{corollary}\label{corm2}For any coprime positive integers $a$ and $b$ such that $a> b$, we have
\begin{align*}
L_{b/a}(z^bx; q, z^{a})=\frac{1}{a}\sum_{u~(\bmod a)}\frac{\theta\big(-\zeta_a^{-u}zq^{\frac{1}{2ab}(1-a)}; q^{\frac{1}{ab}}\big)}{(z^bx)_\infty } L_{1-b/a}\left(q; q, -\zeta_a^{u}x q^{\frac{1}{2}(1-b/a)}\right).
\end{align*}
In particular,
\begin{align*}
\sum_{n\in\bz}\frac{z^{2n}q^{n^2}}{(qxz)_n}=\frac{\theta\left(-zq^{{3}/{4}}; q^{{1}/{2}}\right)}{2(qxz)_\infty}\sum_{n\ge 0}\frac{(-x)^n q^{\frac{1}{2}\binom{n+1}{2}} }{(q)_n}+\frac{\theta\left(zq^{{3}/{4}}; q^{{1}/{2}}\right)}{2(qxz)_\infty}\sum_{n\ge 0}\frac{x^n q^{\frac{1}{2}\binom{n+1}{2}} }{(q)_n}.
\end{align*}
\end{corollary}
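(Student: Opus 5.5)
The plan is to obtain Corollary \ref{corm2} by specializing Corollary \ref{corm1} at $y=0$ and then identifying the two sides using \eqref{eqmll}. Throughout, the index $\alpha$ should be read as $\alpha=a/b$, so that $1-1/\alpha=1-b/a$. This matches the right-hand side and the hypothesis $a>b$ (the subscript on the left of the displayed identity appears to be a transposition of $a/b$).

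\emph{Step 1: justify setting $y=0$.} In Corollary \ref{corm1}, write both sides with the common factor $(z^bx,qz^{-b}y)_\infty$ cleared. For the left side, use the second expression in \eqref{mdef}:
\[
\sum_{n\in\bz}z^{an}q^{\frac{a}{b}\binom{n}{2}}(z^bxq^n,z^{-b}yq^{1-n})_\infty .
\]
Each summand is entire in $y$. Since $\alpha=a/b>1$, the Gaussian factor dominates the growth of $(z^{-b}yq^{1-n})_\infty$ uniformly for $y$ in compact sets. The sum is therefore analytic in $y$, and setting $y=0$ is legitimate; it yields $(z^bx)_\infty L_{a/b}(z^bx;q,z^a)$, in accordance with \eqref{eqmll}.

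On the right side, the quadratic form $Q_{a/b}$ is positive definite because $a>b=br$. Hence $H_{a/b}$ is entire in $(x,y)$, the absolute-convergence proviso of Corollary \ref{corm1} holds automatically, and we may set $y=0$. By \eqref{eqmll},
\[
H_{a/b}(\zeta_a^ux;0;q)=L_{1-b/a}\bigl(q;q,-\zeta_a^uxq^{\frac12(1-b/a)}\bigr).
\]
Finally $(qz^{-b}y)_\infty\to1$ as $y\to0$. Together these give the general formula.

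\emph{Step 2: the particular case.} Take $a=2$, $b=1$, so $\zeta_2^u=\pm1$, and substitute $z\mapsto zq^{1/2}$, $x\mapsto xq^{1/2}$. Then $z^bx\mapsto qxz$, and the left side becomes
\[
\sum_n (zq^{1/2})^{2n}q^{n(n-1)}/(qxz)_n=\sum_n z^{2n}q^{n^2}/(qxz)_n .
\]
On the right, the theta arguments become $\mp zq^{1/4}$ in base $q^{1/2}$, and the $H$-terms become
\[
\sum_{n\ge0}(\mp x)^nq^{\frac{n}{4}}q^{\frac14\binom{n}{2}\cdot 2}\cdots/(q)_n .
\]

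\emph{Main obstacle: bookkeeping.} The remaining work is to match the powers of $q$ in both the theta arguments ($q^{1/4}$ versus the stated $q^{3/4}$) and the exponents in the $H$-series, which must become $q^{\frac12\binom{n+1}{2}}$. I would attempt this with the quasi-periodicity and inversion relations of \eqref{eqmtheta},
\[
\theta(pw;p)=-w^{-1}\theta(w;p),\qquad \theta(p/w;p)=\theta(w;p)\quad (p=q^{1/2}),
\]
together with a compensating shift of $x$ by a power of $q$. If the resulting prefactors fail to agree for the two signs $u=0,1$, I would instead choose the substitution so that $-\zeta_a^{-u}zq^{(1-a)/(2ab)}$ lands directly on $\mp zq^{3/4}$, and then recheck the exponent in $L_{1/2}$.
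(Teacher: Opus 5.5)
Your Step 1 is correct and is the paper's own argument: the paper obtains the general formula by setting $y=0$ in Corollary \ref{corm1} and using \eqref{eqmll}. You are right that the left side should read $L_{a/b}$, and your analyticity argument for passing to $y=0$ is more justification than the paper gives.

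The gap is in Step 2, and it is not a bookkeeping issue. The displayed special case is false as printed, so neither of your proposed repairs can succeed. At $x=0$ the left side is $\sum_{n\in\bz}z^{2n}q^{n^2}$, while the right side is
\[
\tfrac12\bigl[\theta(-zq^{3/4};q^{1/2})+\theta(zq^{3/4};q^{1/2})\bigr]=\sum_{m\in\bz}z^{2m}q^{m^2+m}.
\]
This is the even part in $z$ of $\theta(-zq^{3/4};q^{1/2})=\sum_n z^nq^{n(n+2)/4}$. The coefficients of $z^2$ are $q$ and $q^2$, so the two sides differ. Your repairs fail for the following reasons:
\begin{enumerate}[(i)]
\item Quasi-periodicity gives $\theta(\mp zq^{3/4};q^{1/2})=\pm z^{-1}q^{-1/4}\,\theta(\mp zq^{1/4};q^{1/2})$. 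This is a $z$-dependent factor, with opposite signs in the two terms.
\item Making the theta arguments equal to $\mp zq^{3/4}$ from the start means replacing $z$ by $zq$ in the case $a=2$, $b=1$. That forces the numerator $z^{2n}q^{n^2+n}$ on the left, however $x$ is then scaled.
\item Turning $q^{n^2/4+n/2}$ into $q^{\frac12\binom{n+1}{2}}$ requires $x\mapsto xq^{-1/4}$. That replaces $(qxz)_n$ by $(q^{3/4}xz)_n$.
\end{enumerate}

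Your substitution $z\mapsto zq^{1/2}$, $x\mapsto xq^{1/2}$ is the right one. Since $H_2(\pm xq^{1/2};0;q)=\sum_{n\ge0}(\mp x)^nq^{n(n+2)/4}/(q)_n$, it proves
\[
\sum_{n\in\bz}\frac{z^{2n}q^{n^2}}{(qxz)_n}=\frac{\theta(-zq^{1/4};q^{1/2})}{2(qxz)_\infty}\sum_{n\ge0}\frac{(-x)^nq^{n(n+2)/4}}{(q)_n}+\frac{\theta(zq^{1/4};q^{1/2})}{2(qxz)_\infty}\sum_{n\ge0}\frac{x^nq^{n(n+2)/4}}{(q)_n}.
\]
You should stop there and record that the statement contains a misprint; the same misprint appears in \eqref{eqrr1}. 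Do not try to force agreement with the printed form.

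As a check, put $x=1/z$. Odd powers of $z$ cancel on the right. The coefficient of $z^{2j}$ is
\[
\frac{q^{j^2}}{(q)_\infty}\sum_{n\ge0}\frac{(-1)^nq^{\binom{n+1}{2}+jn}}{(q)_n}=\frac{q^{j^2}(q^{j+1})_\infty}{(q)_\infty},
\]
which equals $q^{j^2}/(q)_j$ for $j\ge0$ and vanishes for $j<0$. This recovers $\sum_{j\ge0}z^{2j}q^{j^2}/(q)_j$. The printed version fails already at the constant term: there it gives $(q^{5/4})_\infty/(q)_\infty\neq1$.
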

Setting $x=1/z$ in the second identity of Corollary \ref{corm2}, we obtain
an equivalent form of an identity claimed by Ramanujan
(cf.\ \cite[Entry 7.2.4]{MR2474043}):
\begin{align}\label{eqrr1}
\sum_{n\ge 0}\frac{z^{2n}q^{n^2}}{(q)_n}
&=
\frac{\theta\left(-zq^{3/4};q^{1/2}\right)}{2(q)_\infty}
\sum_{n\ge 0}
\frac{(-z)^{-n}q^{\frac{1}{2}\binom{n+1}{2}}}{(q)_n}+
\frac{\theta\left(zq^{3/4};q^{1/2}\right)}{2(q)_\infty}
\sum_{n\ge 0}
\frac{z^n q^{\frac{1}{2}\binom{n+1}{2}}}{(q)_n},
\end{align}
which was proved by Andrews in \cite[Theorem 2.4]{MR557539}; see also
Dixit and Kumar \cite[Equation (6.1)]{MR4966569} for details. Thus, the
second identity of Corollary \ref{corm2} is a bilateral extension of
Ramanujan's identity \eqref{eqrr1}.

\medskip
For the case $\alpha=2$, we see that $r\in\{1,2\}$. In view of
Corollary \ref{corm1}, it remains only to discuss the case $r=2$.
Since $L_2(\bm{x};\bm{y};z,q)$ can be expressed as a ${}_2\psi_2$ sum,
Theorem \ref{mth} gives the following transformation formula.

\begin{corollary}\label{mcor}
We have
\begin{align*}
{}_2\psi_2 \!\left(
\begin{array}{c}
z/y_1,z/y_2 \\
zx_1,zx_2
\end{array};
q,y_1y_2
\right)
=
\frac{
H_{2}(\bm{x};\bm{y};q)
\theta\left(-zq^{-1/4};q^{1/2}\right)
+
H_{2}(-\bm{x};-\bm{y};q)
\theta\left(zq^{-1/4};q^{1/2}\right)
}
{
2\prod_{1\le s\le 2}(zx_s,qz^{-1}y_s)_\infty
}.
\end{align*}
In particular, for any $\ell\in\bz$, we have
\begin{align}\label{eqm3}
{}_2\psi_2 \!\left(
\begin{array}{c}
1/y_1,1/y_2 \\
x_1,x_2
\end{array};
q,q^{1/2+\ell}y_1y_2
\right)
=
\frac{
\theta\left(-q^{\ell/2};q^{1/2}\right)
H_{2}(q^{-1/4-\ell/2}\bm{x};
q^{1/4+\ell/2}\bm{y};q)
}
{
2\prod_{1\le s\le 2}(x_s,qy_s)_\infty
}.
\end{align}
\end{corollary}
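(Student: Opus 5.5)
The plan is to specialize Theorem \ref{mth} to $a=2$, $b=1$, $r=2$ (so $a\ge br$ holds with equality), and then to choose $z$ so that one of the two theta functions on the right-hand side vanishes.

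\emph{First identity.} With $a=2$, $b=1$ we have $\zeta_2^u=(-1)^u$, and $q^{\frac{1-a}{2ab}}=q^{-1/4}$, $q^{\frac1{ab}}=q^{1/2}$. Theorem \ref{mth} then reads
\[
L_2(z\bm{x};z^{-1}\bm{y};q,z^2)=\frac{H_2(\bm{x};\bm{y};q)\,\theta(-zq^{-1/4};q^{1/2})+H_2(-\bm{x};-\bm{y};q)\,\theta(zq^{-1/4};q^{1/2})}{2\prod_{1\le s\le 2}(zx_s,qz^{-1}y_s)_\infty}.
\]
It remains to identify the left-hand side as a ${}_2\psi_2$. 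By the relation $L_r(\bm{x};\bm{y};q,z)={}_r\psi_r(\cdots)$ stated after \eqref{mdef}, with $r=2$, $x_s\mapsto zx_s$ and $y_s\mapsto z^{-1}y_s$:

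1. The upper parameters become $1/(z^{-1}y_s)=z/y_s$.
2. The lower parameters become $zx_s$.
3. The argument becomes $(-1)^2(z^{-1}y_1)(z^{-1}y_2)z^2=y_1y_2$.

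This is exactly the stated ${}_2\psi_2$. As in Theorem \ref{mth}, we must assume absolute convergence of $H_2(\pm\bm{x};\pm\bm{y};q)$. Since $\alpha=r=2$, the form $Q_2$ is only semidefinite, so this means assuming $|x_s|,|y_s|<1$ (or whatever convergence the theorem's proviso requires).

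\emph{Second identity.} Put $z=q^{1/4+\ell/2}$. Then $\theta(zq^{-1/4};q^{1/2})=\theta(q^{\ell/2};q^{1/2})=(q^{\ell/2},q^{1/2-\ell/2},q^{1/2};q^{1/2})_\infty$, and this vanishes for every $\ell\in\bz$:

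1. If $\ell\le 0$, the factor $(q^{\ell/2};q^{1/2})_\infty$ contains the term $1-q^{\ell/2+j/2}$ with $j=-\ell$, which is zero.
2. If $\ell\ge1$, the factor $(q^{(1-\ell)/2};q^{1/2})_\infty$ contains the term with $j=\ell-1$, which is zero.

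So only the $H_2(\bm{x};\bm{y};q)$ term survives, multiplied by $\theta(-q^{\ell/2};q^{1/2})$.

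Next rename the variables: set $x_s'=zx_s$ and $y_s'=z^{-1}y_s$. Then:

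1. The upper parameters $z/y_s$ become $1/y_s'$.
2. The lower parameters $zx_s$ become $x_s'$.
3. The argument becomes $y_1y_2=z^2y_1'y_2'=q^{1/2+\ell}y_1'y_2'$.
4. The denominator $\prod_s(zx_s,qz^{-1}y_s)_\infty$ becomes $\prod_s(x_s',qy_s')_\infty$.
5. The argument of $H_2$ becomes $(\bm{x},\bm{y})=(q^{-1/4-\ell/2}\bm{x}',\,q^{1/4+\ell/2}\bm{y}')$.

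Dropping the primes gives \eqref{eqm3}.

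The only delicate point is bookkeeping. One must check that the specialization $z=q^{1/4+\ell/2}$ is legitimate, i.e.\ that the surviving $H_2$ still converges absolutely under the rescaled arguments. The vanishing term then contributes $0$ times a finite quantity, so no limiting argument is needed. I would also double-check the sign conventions in $\theta(\pm zq^{-1/4};q^{1/2})$ against the choice $u=0,1$ when reading off Theorem \ref{mth}.
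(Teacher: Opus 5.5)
Your proposal is correct and matches the paper's argument: specialize Theorem \ref{mth} to $a=2$, $b=1$, $r=2$, identify $L_2(z\bm{x};z^{-1}\bm{y};q,z^2)$ with the stated ${}_2\psi_2$ via the relation following \eqref{mdef}, then obtain \eqref{eqm3} by taking $z=q^{1/4+\ell/2}$ (so that $\theta(q^{\ell/2};q^{1/2})=0$) and rescaling $\bm{x}\mapsto q^{-1/4-\ell/2}\bm{x}$, $\bm{y}\mapsto q^{1/4+\ell/2}\bm{y}$. Your convergence bookkeeping usefully fills in the paper's terse derivation; in fact, absolute convergence of $H_2(\pm\bm{x};\pm\bm{y};q)$ here is equivalent to $|x_1x_2|<1$ and $|y_1y_2|<1$, which is exactly the convergence region of the ${}_2\psi_2$ on the left (so $|x_s|,|y_s|<1$ is sufficient but not needed).
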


Since $\theta(q^{\ell/2};q^{1/2})=0$ for all $\ell\in\bz$, identity
\eqref{eqm3} is obtained from the main identity of Corollary \ref{mcor}
by making the substitutions
\[
z\mapsto q^{1/4+\ell/2},\qquad
\bm{y}\mapsto q^{1/4+\ell/2}\bm{y},\qquad
\bm{x}\mapsto q^{-1/4-\ell/2}\bm{x}.
\]
Taking $x_1,y_1,y_2\to 0$, $x_2\mapsto q$, and $q\mapsto q^2$ in
\eqref{eqm3}, we obtain
\begin{equation}\label{eqgmit}
\sum_{n\ge 0}
\frac{q^{2n(n-1)+(1/2+\ell)n}}{(q^2;q^2)_n}
=
\frac{\theta\left(-q^\ell;q\right)}
{2(q^2;q^2)_\infty}
\sum_{n\ge 0}
\frac{(-1)^n q^{n^2/2+(3/2-\ell)n}}{(q^2;q^2)_n}.
\end{equation}
Using the identity
\[
\theta(z;q)=(-z)^\ell q^{\binom{\ell}{2}}\theta(q^\ell z;q),
\]
for all $ \ell\in\bz$, we have
\[
\theta\left(-q^\ell;q\right)
=
q^{-\binom{\ell}{2}}\theta(-1;q)
=
2q^{-\ell(\ell-1)/2}(-q)_\infty(q^2;q^2)_\infty.
\]
It follows that the identity \eqref{eqgmit} is an equivalent form of the following
identity of McIntosh \cite[p.~418, (10)]{MR1618298}:
\begin{equation}\label{mci}
\sum_{n\ge 0}
\frac{q^{2n^2+(2m+1)n+\frac{m(m+1)}{2}}}{(q^2;q^2)_n}
=
(-q)_\infty
\sum_{j\ge 0}
\frac{(-1)^j q^{j(j+1)/2-mj}}{(q^2;q^2)_j},
\end{equation}
for all $m\in\bz$. Thus Corollary \ref{mcor} generalizes McIntosh's identity \eqref{mci}.

\medskip
The rest of the paper is organized as follows. In Section \ref{sec2}, we
prove Theorems \ref{mth1} and \ref{mth}. The proofs rely on the
$q$-binomial theorem and on certain transformation and summation formulas
for theta functions. In Section \ref{sec3}, we present some applications of
Theorem \ref{mth}. In particular, we investigate the consequences of
Theorem \ref{mth} in the cases $\alpha=r=1$ and $\alpha=r=2$. We conclude
in Section \ref{sec4} with an asymptotic analysis that supports the
explanations given in this paper.

\section{The proof of the main theorems}\label{sec2}
In this section, we prove our main theorems. We first establish two basic
facts about the theta function $\theta(z;q)$, which will be used in the
proofs.

\subsection{Theta functions}

We begin with the following transformation property.

\begin{lemma}\label{lem21}
Let $z,t>0$. For any $\mu\in\br$, we have
\begin{align*}
\sum_{n\in \mu+\bz} z^n e^{-t\binom{n}{2}}
=
\left(\frac{2\pi z}{t}\right)^{1/2}
e^{\frac{t}{8}+\frac{1}{2t}(\log z)^2}
\sum_{n\in\bz}
e^{2\pi \ri n\left(\mu-\frac{1}{2}-\frac{1}{t}\log z\right)
-\frac{2\pi^2 n^2}{t}}.
\end{align*}
\end{lemma}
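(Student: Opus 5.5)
This is a direct application of Poisson summation to a Gaussian. Set $g(u):=z^u e^{-t\binom{u}{2}}=\exp\bigl(u\log z-\tfrac{t}{2}(u^2-u)\bigr)$ for real $u$. Since $t>0$, $g$ is a Gaussian and hence a Schwartz function, so Poisson summation applies without any convergence concern:
\[
\sum_{n\in\bz} g(n+\mu)=\sum_{k\in\bz}\hat g(k)\,e^{2\pi\ri k\mu},
\qquad
\hat g(k)=\int_{\br} g(u)e^{-2\pi\ri ku}\,du .
\]
The left-hand side is exactly the sum over $n\in\mu+\bz$ in the statement.

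To evaluate $\hat g(k)$, complete the square in the exponent. Write $\lambda:=\log z+\tfrac t2-2\pi\ri k$. Then the exponent is
\[
-\tfrac t2u^2+\lambda u=-\tfrac t2\left(u-\lambda/t\right)^2+\frac{\lambda^2}{2t}.
\]
Shifting the contour of the Gaussian integral is justified by Cauchy's theorem, since the integrand decays on horizontal strips. This gives
\[
\hat g(k)=\left(\frac{2\pi}{t}\right)^{1/2}e^{\lambda^2/(2t)}.
\]

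Next, expand $\lambda^2/(2t)$, setting $L=\log z$:
\[
\frac{\lambda^2}{2t}=\frac{(L+t/2)^2}{2t}-\frac{2\pi\ri k(L+t/2)}{t}-\frac{2\pi^2k^2}{t}.
\]
The first term equals $\tfrac{L^2}{2t}+\tfrac L2+\tfrac t8$, and $e^{L/2}=z^{1/2}$, which produces the prefactor $(2\pi z/t)^{1/2}e^{t/8+(\log z)^2/(2t)}$. The middle term equals $-2\pi\ri k\bigl(\tfrac1t\log z+\tfrac12\bigr)$. Combined with the factor $e^{2\pi\ri k\mu}$, it gives $e^{2\pi\ri k(\mu-\frac12-\frac1t\log z)}$, and the last term gives $e^{-2\pi^2k^2/t}$. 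Renaming $k$ as $n$ yields the stated identity.

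The only point needing care is the sign convention in the Fourier transform and the Poisson formula, which must produce $e^{+2\pi\ri k\mu}$ for the shift by $\mu$. Even if the opposite convention is used, replacing $k\mapsto -k$ leaves $e^{-2\pi^2k^2/t}$ unchanged and restores the stated form, so this is a consistency check rather than a real obstacle. No deeper difficulty is expected.
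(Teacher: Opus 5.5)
Your proposal is correct and is essentially the paper's argument: both rest on Poisson summation (the paper phrases it as the Fourier expansion of the periodized Gaussian $\sum_{n\in\bz}e^{-(n+x)^2t}$) combined with completing the square, and your constants all check out. The only cosmetic difference is the order of operations. The paper completes the square first, turning the linear term into a real shift $x=\mu-\frac12-\frac1t\log z$, so it needs only the standard real-shift theta inversion; you instead complete the square inside the Fourier integral with complex $\lambda$, which requires the contour shift you justify.
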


\begin{proof}
Taking the Fourier series expansion of the $1$-periodic function
\[
\sum_{n\in\bz} e^{-(n+x)^2t}, 
\]
for $x\in\br$ yields the well-known formula
\[
\sum_{n\in\bz} e^{-(n+x)^2t}
=
\left(\frac{\pi}{t}\right)^{1/2}
\sum_{n\in\bz} e^{-\pi^2 n^2/t+2\pi\ri nx}.
\]
After a simple algebraic manipulation, we obtain
\begin{align*}
\sum_{n\in \mu+\bz} z^n e^{-t\binom{n}{2}}
&=
e^{\frac{t}{2}\left(\frac{1}{2}+t^{-1}\log z\right)^2}
\sum_{n\in \mu+\bz}
e^{-\frac{t}{2}
\left[n^2-2n\left(\frac{1}{2}+t^{-1}\log z\right)
+\left(\frac{1}{2}+t^{-1}\log z\right)^2\right]} \\
&=
z^{1/2}e^{\frac{t}{8}+\frac{1}{2t}(\log z)^2}
\sum_{n\in\bz}
e^{-\frac{t}{2}
\left(n+\mu-\frac{1}{2}-t^{-1}\log z\right)^2} \\
&=
\left(\frac{2\pi z}{t}\right)^{1/2}
e^{\frac{t}{8}+\frac{1}{2t}(\log z)^2}
\sum_{n\in\bz}
e^{-\frac{2\pi^2 n^2}{t}
+2\pi\ri n\left(\mu-\frac{1}{2}-t^{-1}\log z\right)}.
\end{align*}
This completes the proof.
\end{proof}

We also need the following summation formula.

\begin{lemma}\label{lem22}
Let $a,b\in\bn$ with $\gcd(a,b)=1$, and let $u\in\bz$. Then
\[
\sum_{v~(\bmod a)}
\zeta_a^{-ubv}
\sum_{n\in bv/a+\bz}
z^{an}q^{a\binom{n}{2}}
=
\theta\left(
-z\zeta_a^{-u}q^{\frac{1-a}{2a}};
q^{1/a}
\right).
\]
\end{lemma}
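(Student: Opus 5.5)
Reindexing turns the double sum into one sum over $\frac1a\bz$, and that sum is the theta series \eqref{eqmtheta} in the base $q^{1/a}$.

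\textbf{Step 1 (combine the sums).} As $v$ runs over residues mod $a$ and $k$ over $\bz$, the number $n=bv/a+k$ runs over all $m/a$ with $m=bv+ak$. Since $\gcd(a,b)=1$, the map $(v\bmod a,\,k)\mapsto m=bv+ak$ is a bijection onto $\bz$, with $m\equiv bv\pmod a$. The phase satisfies
\[
\zeta_a^{-ubv}=\zeta_a^{-um},
\]
because $bv\equiv m\pmod a$ and $\zeta_a^{-u(\cdot)}$ depends only on residues mod $a$. I would note that the expression $\zeta_a^{-ubv}$ is well defined on residues $v\bmod a$, and the inner sum over $bv/a+\bz$ depends only on $v\bmod a$ as well. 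So the left-hand side equals
\[
\sum_{m\in\bz}\zeta_a^{-um}\,z^{m}\,q^{a\binom{m/a}{2}}.
\]
Absolute convergence comes from the Gaussian factor, so the rearrangement is justified.

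\textbf{Step 2 (compute the exponent).} We have
\[
a\binom{m/a}{2}=\frac{m(m-a)}{2a}=\frac{1}{a}\binom{m}{2}+\frac{(1-a)m}{2a}.
\]
Writing $p=q^{1/a}$, the sum becomes
\[
\sum_{m\in\bz}\bigl(z\zeta_a^{-u}q^{\frac{1-a}{2a}}\bigr)^m p^{\binom m2}.
\]
By the definition \eqref{eqmtheta}, this is $\theta\bigl(-z\zeta_a^{-u}q^{\frac{1-a}{2a}};q^{1/a}\bigr)$.

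The only delicate point is the bijection and the consistency of the phase in Step 1. Both follow from the coprimality of $a$ and $b$, so I expect no real obstacle.
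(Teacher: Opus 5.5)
Your proof is correct and follows the paper's argument. You use $\gcd(a,b)=1$ to merge the cosets $bv/a+\bz$ into $\frac{1}{a}\bz$, with the phase $\zeta_a^{-ubv}=\zeta_a^{-um}$ where $m=an$, and then rewrite the exponent to recognize the theta series in the base $q^{1/a}$. Your explicit reindexing $m=bv+ak$ and the identity $a\binom{m/a}{2}=\frac{1}{a}\binom{m}{2}+\frac{(1-a)m}{2a}$ simply spell out the paper's final step.
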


\begin{proof}
Since $\gcd(a,b)=1$, the integers $bv$ run over a complete residue system
modulo $a$ as $v$ runs over a complete residue system modulo $a$. Thus,
\begin{align*}
\sum_{v~(\bmod a)}
\zeta_a^{-ubv}
\sum_{n\in bv/a+\bz}
z^{an}q^{a\binom{n}{2}}
&=
\sum_{v~(\bmod a)}
\sum_{n\in bv/a+\bz}
(z\zeta_a^{-u})^{an}q^{a\binom{n}{2}} \\
&=
\sum_{n\in \frac{1}{a}\bz}
(z\zeta_a^{-u})^{an}q^{a\binom{n}{2}} \\
&=
\sum_{n\in\bz}
\left(z\zeta_a^{-u}q^{\frac{1-a}{2a}}\right)^n
q^{\frac{1}{a}\binom{n}{2}},
\end{align*}
that is
$$\sum_{v~(\bmod a)}
\zeta_a^{-ubv}
\sum_{n\in bv/a+\bz}
z^{an}q^{a\binom{n}{2}}=
\theta\left(
-z\zeta_a^{-u}q^{\frac{1-a}{2a}};
q^{1/a}
\right),$$
which completes the proof.
\end{proof}

\subsection{Proofs of Theorems \ref{mth1} and \ref{mth}}

We now prove our main results. We begin by deriving the transformation
formula \eqref{eq21} below for $L_\alpha(\bm{x};\bm{y};q,z)$, which is
related to the theta-type series
\[
\sum_{n\in\mu+\mathbb{Z}} z^n q^{\binom{n}{2}}
\]
studied in the previous subsection. Recall from \eqref{mdef} that
\begin{align*}
L_\alpha(\bm{x};\bm{y};q,z)
=
\frac{1}{\prod_{1\le s\le r}(x_s)_\infty(qy_s)_\infty}
\sum_{n\in\bz}
z^n q^{\alpha\binom{n}{2}}
\prod_{1\le s\le r}
(x_sq^n)_\infty(y_s q^{1-n})_\infty .
\end{align*}
Expanding the products in the summand into series, we obtain
\begin{align*}
L_\alpha(\bm{x};\bm{y};q,z^\alpha)
&=
\sum_{\substack{i_s,j_s\ge 0\\ 1\le s\le r}}
\frac{
q^{\sum_{1\le s\le r}
\left(\binom{i_s}{2}+\binom{j_s+1}{2}\right)}
\prod_{1\le s\le r}(-x_s)^{i_s}(-y_s)^{j_s}
}
{
\prod_{1\le s\le r}(q)_{i_s}(q)_{j_s}
}
\\
&\quad\times
\frac{
\sum_{n\in\mathbb{Z}}
z^{\alpha n}
q^{\alpha\binom{n}{2}
+n\sum_{1\le s\le r}(i_s-j_s)}
}
{
\prod_{1\le s\le r}(x_s,qy_s)_\infty
}.
\end{align*}
Here we have used Euler's well-known identities
\[
(x)_\infty
=
\sum_{h\ge 0}
\frac{(-x)^h q^{\binom{h}{2}}}{(q)_h}
\;\; \text{and}\;\;
(xq)_\infty
=
\sum_{h\ge 0}
\frac{(-x)^h q^{\binom{h+1}{2}}}{(q)_h}.
\]
Therefore, after a simple algebraic manipulation, we obtain
\begin{align}\label{eq21}
L_\alpha(z\bm{x};z^{-1}\bm{y};q,z^\alpha)
&=
\sum_{\substack{i_s,j_s\ge 0\\ 1\le s\le r}}
\frac{
q^{\frac{1}{2}Q_\alpha(\bm{i},\bm{j})}
\prod_{1\le s\le r}(-x_s)^{i_s}(-y_s)^{j_s}
}
{
\prod_{1\le s\le r}
(q)_{i_s}(q)_{j_s}(zx_s,qz^{-1}y_s)_\infty
}
\notag\\
&\quad\times
\sum_{n\in \frac{1}{\alpha}
\sum_{1\le s\le r}(i_s-j_s)+\bz}
z^{\alpha n}q^{\alpha\binom{n}{2}},
\end{align}
where $Q_\alpha(\bm{i},\bm{j})$ is defined in \eqref{eqdef18}.

\begin{proof}[Proof of Theorem \ref{mth1}]
Applying Lemma \ref{lem21} with
\[
\mu=\frac{1}{\alpha}\sum_{1\le s\le r}(i_s-j_s)
\]
to the inner sum in \eqref{eq21}, we obtain
\begin{align*}
&\frac{\prod_{1\le s\le r}(zx_s,qz^{-1}y_s)_\infty}
{(2\pi z^\alpha/(\alpha t))^{1/2}
e^{\frac{\alpha t}{8}+\frac{1}{2\alpha t}(\log z^\alpha)^2}}
L_\alpha(z{\bm x};z^{-1}{\bm y};q,z^\alpha) \\
&=
\sum_{\substack{i_s,j_s\ge 0\\ 1\le s\le r}}
\frac{q^{\frac12 Q_\alpha(\bm{i},\bm{j})}
\prod_{1\le s\le r}(-x_s)^{i_s}(-y_s)^{j_s}}
{\prod_{1\le s\le r}(q)_{i_s}(q)_{j_s}}
\sum_{n\in\mathbb{Z}}
e^{-\frac{2\pi^2 n^2}{\alpha t}
+2\pi\ri n\left(\frac{1}{\alpha}\sum_{1\le s\le r}(i_s-j_s)+\frac12-\frac{1}{\alpha t}\log z^\alpha\right)} \\
&=
\sum_{n\in\mathbb{Z}}
H_\alpha\!\left(e^{2\pi\ri n/\alpha}{\bm x};
e^{-2\pi\ri n/\alpha}{\bm y};q\right)
e^{2\pi\ri n\left(\frac12-\frac{1}{t}\log z\right)-\frac{2\pi^2 n^2}{\alpha t}},
\end{align*}
where we have used the definition \eqref{eqdef18} of
$H_\alpha({\bm x};{\bm y};q)$.
This completes the proof.
\end{proof}

\begin{proof}[Proof of Theorem \ref{mth}]
Recall that $a$ and $b$ are coprime positive integers. Since $b$ is invertible
modulo $a$, if $u$ runs through a complete residue system modulo $a$, then so
does $bu$. Therefore,
\[
\frac{1}{a}\sum_{u~(\bmod a)}\zeta_a^{-buv}\zeta_a^{bun}
=
\begin{cases}
1, & n\equiv v \pmod a,\\
0, & n\not\equiv v \pmod a.
\end{cases}
\]
Using this orthogonality relation, together with the definition
\eqref{eqdef18} of $H_\alpha({\bm x};{\bm y};q)$, we obtain
\begin{align*}
\frac{1}{a}\sum_{u~(\bmod a)}\zeta_a^{-buv}
H_\alpha(\zeta_a^{bu}{\bm x};\zeta_a^{-bu}{\bm y};q)
&=
\sum_{\substack{i_s,j_s\ge 0\\ 1\le s\le r\\
\sum_{1\le s\le r}(i_s-j_s)\equiv v~(\bmod a)}}
\frac{q^{\frac12 Q_\alpha(\bm{i},\bm{j})}
\prod_{1\le s\le r}(-x_s)^{i_s}(-y_s)^{j_s}}
{\prod_{1\le s\le r}(q)_{i_s}(q)_{j_s}}.
\end{align*}
Hence, by splitting the sum in \eqref{eq21} according to the residue class of
$\sum_{1\le s\le r}(i_s-j_s)$ modulo $a$, we obtain
\begin{align*}
L_\alpha\!\left(z^b{\bm x};z^{-b}{\bm y};q,z^a\right)
&=
\frac{1}{\prod_{1\le s\le r}(z^bx_s,qz^{-b}y_s)_\infty}
\sum_{v~(\bmod a)}
\sum_{n\in \frac{vb}{a}+\bz}
z^{an}q^{\alpha\binom{n}{2}} \\
&\qquad\times \sum_{\substack{i_s,j_s\ge 0\\ 1\le s\le r\\
\sum_{1\le s\le r}(i_s-j_s)\equiv v~(\bmod a)}}
\frac{q^{\frac12 Q_\alpha(\bm{i},\bm{j})}
\prod_{1\le s\le r}(-x_s)^{i_s}(-y_s)^{j_s}}
{\prod_{1\le s\le r}(q)_{i_s}(q)_{j_s}}\\
=&\frac{1}{\prod_{1\le s\le r}(z^bx_s,qz^{-b}y_s)_\infty}
\sum_{v~(\bmod a)}
\sum_{n\in \frac{vb}{a}+\bz}
z^{an}q^{\alpha\binom{n}{2}}\\
&\qquad\times\frac{1}{a}\sum_{u~(\bmod a)}\zeta_a^{-buv}
H_\alpha(\zeta_a^{bu}{\bm x};\zeta_a^{-bu}{\bm y};q).
\end{align*}
Applying Lemma \ref{lem22} to the identity above, we obtain
\begin{align*}
L_\alpha\!\left(z^b{\bm x};z^{-b}{\bm y};q,z^a\right)
&=
\frac{1}{a}\sum_{u~(\bmod a)}
\frac{H_\alpha(\zeta_a^u{\bm x};\zeta_a^{-u}{\bm y};q)}
{\prod_{1\le s\le r}(z^bx_s,qz^{-b}y_s)_\infty} 
\sum_{v~(\bmod a)}\zeta_a^{-ubv}
\sum_{n\in \frac{bv}{a}+\bz}
z^{an}q^{\frac{a}{b}\binom{n}{2}} \\
&=
\frac{1}{a}\sum_{u~(\bmod a)}
\frac{H_\alpha(\zeta_a^u{\bm x};\zeta_a^{-u}{\bm y};q)}
{\prod_{1\le s\le r}(z^bx_s,qz^{-b}y_s)_\infty}
\theta\!\left(-z\zeta_a^{-u}q^{\frac{1-a}{2ab}};q^{\frac{1}{ab}}\right),
\end{align*}
which completes the proof.
\end{proof}

\section{Applications of Theorem \ref{mth}}\label{sec3}
Some special cases of Theorem \ref{mth} yield transformation formulas
relating a basic hypergeometric series to a multiple basic hypergeometric
series. From these formulas, we can further derive some summation formulas.

\subsection{On Ramanujan's ${}_1\psi_1$ summation}

Setting $\alpha=r=1$ in Theorem \ref{mth}, we obtain the famous
Ramanujan ${}_1\psi_1$ summation formula.

\begin{corollary}\label{corrama11}
We have
\begin{align*}
L_1(zx;z^{-1}y;q,z)
=
\frac{\theta(z;q)(xy)_\infty}
{(zx,qz^{-1}y,-x,-y)_\infty}.
\end{align*}
\end{corollary}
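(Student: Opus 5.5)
The plan is to specialize Theorem \ref{mth} to $a=b=r=1$, which is the case \eqref{eqr11} of Corollary \ref{corm1}, and then evaluate the double series $H_1(x;y;q)$ in closed form. The hypothesis $a\ge br$ of Theorem \ref{mth} holds with equality. The convergence proviso requires $H_1(x;y;q)$ to converge absolutely. Since $Q_1$ is only semidefinite, I will first assume $|x|,|y|<1$ and remove this restriction at the end by analytic continuation. With a single residue $u\equiv 0\pmod 1$, Theorem \ref{mth} gives
\[
L_1(zx;z^{-1}y;q,z)=\frac{H_1(x;y;q)}{(zx,qz^{-1}y)_\infty}\,\theta(-z;q).
\]

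Next I evaluate $H_1$. For $\alpha=r=1$ the quadratic form is $Q_1(i,j)=i^2+j^2-(i-j)^2=2ij$, so
\[
H_1(x;y;q)=\sum_{i,j\ge0}\frac{q^{ij}(-x)^i(-y)^j}{(q)_i(q)_j}.
\]
I sum over $j$ first, using the Euler identity $\sum_{j\ge0}w^j/(q)_j=1/(w)_\infty$ with $w=-yq^i$. This gives
\[
\frac{1}{(-yq^i)_\infty}=\frac{(-y)_i}{(-y)_\infty}.
\]
The remaining sum is $\sum_{i\ge0}\frac{(-y)_i}{(q)_i}(-x)^i$. By the $q$-binomial theorem it equals $(xy)_\infty/(-x)_\infty$. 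Hence
\[
H_1(x;y;q)=\frac{(xy)_\infty}{(-x,-y)_\infty},
\]
and substituting this yields the product formula.

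To extend beyond $|x|,|y|<1$, note that both sides are meromorphic in $x$ and $y$, so the identity persists. As a cross-check I would compare with Ramanujan's ${}_1\psi_1$ sum applied directly to
\[
L_1(zx;z^{-1}y;q,z)=\sum_{n\in\bz}\frac{(z/y)_n}{(zx)_n}(-y)^n.
\]
Here the ${}_1\psi_1$ parameters are $a=z/y$, $b=zx$ and argument $-y$, so $aw=-z$. The ${}_1\psi_1$ formula therefore produces $(q,-z,-q/z)_\infty=\theta(-z;q)$ in the numerator, which agrees with the output of Theorem \ref{mth}.

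The main obstacle is bookkeeping rather than analysis. The first point is to confirm the sign in the theta factor. With the convention \eqref{eqmtheta} the derivation naturally produces $\theta(-z;q)$, so I would reconcile this with the $\theta(z;q)$ written in the statement, either as the substitution $z\mapsto -z$ or as a normalization slip. The second point is to justify the interchange of summations and the boundary case $\alpha=r$ through the absolute convergence of $H_1$ when $|x|,|y|<1$.
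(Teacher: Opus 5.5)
Your proposal is the paper's proof: specialize Theorem \ref{mth} to $a=b=r=1$ (this is \eqref{eqr11}), then evaluate $H_1(x;y;q)=(xy)_\infty/(-x,-y)_\infty$ by summing over $j$ with Euler's identity and over $i$ with the $q$-binomial theorem.

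On the sign, you do not need to hedge. Theorem \ref{mth}, \eqref{eqr11}, your ${}_1\psi_1$ check and the limiting case $x,y\to 0$ all give $\theta(-z;q)$. In that limiting case the left side is $\sum_{n\in\bz}z^nq^{\binom n2}=\theta(-z;q)$, which is nonzero at $z=1$, whereas $\theta(1;q)=0$. So the $\theta(z;q)$ in the statement, and in the first line of the paper's proof, is a misprint. The substitution $z\mapsto -z$ cannot absorb it, because it also changes $zx$, $z^{-1}y$ and the last argument of $L_1$.

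Your ``extension beyond $|x|,|y|<1$'' has nothing to extend to. For $\alpha=r=1$ the left side is a genuine ${}_1\psi_1$ series in $-y$, which generically converges only for $|x|,|y|<1$. That polydisc is exactly where your Fubini argument applies, so it is the natural domain of the identity.
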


\begin{proof}
Setting $\alpha=r=1$ in Theorem \ref{mth}, we obtain
\begin{align*}
L_1(zx;z^{-1}y;q,z)
&=
\frac{\theta(z;q)}{(zx,qz^{-1}y)_\infty}H_1(x;y;q)\\
&=
\frac{\theta(z;q)}{(zx,qz^{-1}y)_\infty}
\sum_{i,j\ge 0}
\frac{(-x)^i(-y)^j q^{ij}}{(q)_i(q)_j}\\
&=
\frac{\theta(z;q)}{(zx,qz^{-1}y)_\infty}
\sum_{i\ge 0}
\frac{(-x)^i}{(q)_i}\frac{1}{(-yq^i)_\infty}.
\end{align*}
Here we have used Euler's well-known identity
\[
\sum_{n\ge 0}\frac{t^n}{(q)_n}
=
\frac{1}{(t)_\infty}.
\]
Using the $q$-binomial theorem,
\[
\sum_{n\ge 0}\frac{(a)_n}{(q)_n}t^n
=
\frac{(at)_\infty}{(t)_\infty},
\]
we then obtain
\[
L_1(zx;z^{-1}y;q,z)
=
\frac{\theta(z;q)}{(zx,qz^{-1}y)_\infty}
\frac{1}{(-y)_\infty}
\sum_{i\ge 0}
\frac{(-y)_i(-x)^i}{(q)_i}
=
\frac{\theta(z;q)(xy)_\infty}
{(zx,qz^{-1}y,-x,-y)_\infty}.
\]
This completes the proof.
\end{proof}
As an application of Corollary \ref{corrama11}, we obtain the following identity.

\begin{proposition}\label{promr1}
For all $y,z\in\bc$ with $|y|>|q|$, $z\neq 0$, and
$1/y,-z/y\not\in \{1,q,q^{2},\ldots\}$, we have
\begin{align*}
\sum_{n\ge 1}\frac{z^n q^{\binom{n}{2}}}{(y)_n}
+(q/y)_\infty\sum_{\ell\ge 0}\frac{(q/y)^\ell}{(q)_\ell(1+z^{-1}yq^{\ell})}
=\frac{\theta(-z; q)}{(y, -y/z)_\infty}.
\end{align*}
\end{proposition}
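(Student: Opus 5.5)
The plan is to prove the bilateral identity
\[
\sum_{n\in\bz}\frac{z^nq^{\binom n2}}{(y)_n}=\frac{\theta(-z;q)}{(y,-y/z)_\infty},
\]
and then rewrite its part with $n\le 0$ as the $\ell$-sum in the statement.

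For the bilateral identity, I would use Corollary \ref{corrama11} (equivalently, Theorem \ref{mth} with $a=b=r=1$) in the degenerate case where the second parameter tends to $0$. Since $(1/y)_n(-y)^n\to q^{\binom n2}$ as $y\to0$, the case $r=1$ of \eqref{mdef} gives $L_1(x;0;q,z)=\sum_{n\in\bz}z^nq^{\binom n2}/(x)_n$. Likewise $H_1(x;0;q)=\sum_i(-x)^i/(q)_i=1/(-x)_\infty$. Substituting $x\mapsto y/z$ then yields the display above. The theta factor must be taken from Theorem \ref{mth} with $a=b=1$, which gives $\theta(-z;q)$. This normalization is confirmed by the check $y=0$, where both sides equal $\sum_n z^nq^{\binom n2}=\theta(-z;q)$. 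I would also justify the limit $y\to 0$ termwise by dominated convergence, using that $q^{\binom n2}$ dominates for $|n|$ large, and note the convergence conditions on $z$ as needed.

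Next I would split off the terms with $n=-m\le 0$. By the reflection relation \eqref{eqrf}, $1/(y)_{-m}=(yq^{-m})_m=(-y)^mq^{-\binom{m+1}2}(q/y)_m$. Since $\binom{-m}{2}=\binom{m+1}{2}$, each term becomes
\[
z^{-m}q^{\binom{-m}2}/(y)_{-m}=(q/y)_m(-y/z)^m .
\]
I would then write $(q/y)_m=(q/y)_\infty/(q^{m+1}/y)_\infty$ and expand
\[
\frac{1}{(q^{m+1}/y)_\infty}=\sum_{\ell\ge0}\frac{(q^{m+1}/y)^\ell}{(q)_\ell}
\]
by Euler's identity; this is valid since $|q/y|<1$. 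After interchanging the $m$- and $\ell$-sums, the inner sum is geometric: $\sum_{m\ge0}(-yq^\ell/z)^m=1/(1+z^{-1}yq^\ell)$. This produces exactly the term $(q/y)_\infty\sum_\ell (q/y)^\ell/((q)_\ell(1+z^{-1}yq^\ell))$.

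The main obstacle is convergence. The interchange and the geometric series require $|y/z|<1$, while the proposition is stated for all $z\ne0$. I would first prove the identity for $|z|>|y|$ and then extend it by analytic continuation in $z$. The left side is meromorphic in $z\in\bc\setminus\{0\}$: the sum over $n\ge1$ is entire in $z$, and the $\ell$-series converges locally uniformly away from the points $z=-yq^\ell$ because $|q/y|<1$. The right side is meromorphic with poles only at the zeros of $(-y/z)_\infty$, that is, at $z=-yq^\ell$. The hypotheses $1/y\notin\{q^k\}$ and $-z/y\notin\{q^k\}$ exclude exactly the degenerate cases, namely $(y)_n=0$ and poles of the $\ell$-sum. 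So the identity persists to all admissible $z$.
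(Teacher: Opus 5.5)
Your proposal is correct and follows the paper's proof essentially step for step. You take the degenerate case (second parameter $\to 0$) of Corollary \ref{corrama11} to get $\sum_{n\in\bz} z^nq^{\binom n2}/(y)_n=\theta(-z;q)/(y,-y/z)_\infty$, rewrite the $n\le 0$ terms via \eqref{eqrf}, Euler's expansion and a geometric series for $|z|>|y|$, and finish by analytic continuation. Your bookkeeping is in places cleaner than the paper's: you correctly use $\theta(-z;q)$ (as Theorem \ref{mth} gives, not the $\theta(z;q)$ printed in Corollary \ref{corrama11}), you use the right condition $|q/y|<1$ for the Euler expansion, and you continue in $z$ alone.
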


\begin{proof}
Letting $x\to 0$ in Corollary \ref{corrama11}, we obtain
\begin{align*}
\sum_{n\in\bz}\frac{z^n q^{n^2/2}}{(-q^{1/2}zy)_n}
=\frac{\theta(-q^{1/2}z; q)}{(-q^{1/2}zy, y)_\infty}.
\end{align*}
Using the reflection relation \eqref{eqrf} of $q$-shifted factorials, and then making the substitutions
$y\mapsto -q^{-1/2}y/z$ and $z\mapsto q^{-1/2}z$,
the above identity can be rewritten as
\begin{align*}
\sum_{n\ge 1}\frac{z^n q^{\binom{n}{2}}}{(y)_n}
+\sum_{n\ge 0}(q/y)_n (-y/z)^n
=\frac{\theta(-z; q)}{(y, -y/z)_\infty}.
\end{align*}
Moreover, for $|y/z|<1$ and $|y|>1$, the $q$-binomial theorem yields
\begin{align*}
\sum_{n\ge 0}(q/y)_n (-y/z)^n
&=(q/y)_\infty\sum_{n\ge 0}\frac{(-y/z)^n}{(y^{-1}q^{1+n})_\infty}\\
&=(q/y)_\infty\sum_{n\ge 0}(-y/z)^n\sum_{\ell\ge 0}\frac{(y^{-1}q^{1+n})^\ell}{(q)_\ell}\\
&=(q/y)_\infty\sum_{\ell\ge 0}\frac{(q/y)^\ell}{(q)_\ell}
\sum_{n\ge 0}(-z^{-1}yq^{\ell})^n\\
&=(q/y)_\infty\sum_{\ell\ge 0}\frac{(q/y)^\ell}{(q)_\ell(1+z^{-1}yq^{\ell})}.
\end{align*}
Thus, for all $|q^{1/2}|<|y|<|z|$, we have
\begin{align*}
\sum_{n\ge 1}\frac{z^n q^{\binom{n}{2}}}{(y)_n}
+(q/y)_\infty\sum_{\ell\ge 0}\frac{(q/y)^\ell}{(q)_\ell(1+z^{-1}yq^{\ell})}
=\frac{\theta(-z; q)}{(y, -y/z)_\infty}.
\end{align*}
Since both sides of the above identity are analytic functions for all
$y,z\in\bc$ with $|y|>|q|$, $z\neq 0$, and
$1/y,-z/y\not\in \{1,q,q^{2},\ldots\}$, the proposition follows by analytic continuation.
\end{proof}
Let $0< q<y<1$ and $z>0$. Then it is clear that
\begin{align*}
\frac{1}{1+y/z}
&=\frac{1}{1+y/z} (q/y)_\infty
\sum_{\ell\ge 0}\frac{(q/y)^\ell}{(q)_\ell}\\
&\le \sum_{\ell\ge 0}
\frac{(q/y)_\infty (q/y)^\ell}
{(q)_\ell(1+z^{-1}yq^{\ell})}
\le  (q/y)_\infty
\sum_{\ell\ge 0}\frac{(q/y)^\ell}{(q)_\ell}
=1.
\end{align*}
Moreover, for $q<u<1$ and $z>0$, there exists a constant
$\kappa_z>0$, depending only on $z$, such that
\[
\sum_{n\ge 1}\frac{z^n q^{\binom{n}{2}}}{(u)_n}
\ge
\sum_{n\ge 1}\frac{z^n q^{\binom{n}{2}}}{(q)_n}
\ge e^{\kappa_z/t},
\]
by Theorem \ref{macmain} of McIntosh \cite{MR1618298}. This yields the following asymptotic formula.

\begin{corollary}\label{cor32}
For all $z>0$ and $0<q<y<1$, there exists a constant
$\kappa_z>0$, depending only on $z$, such that
\begin{align*}
\sum_{n\ge 1}\frac{z^n q^{\binom{n}{2}}}{(y)_n}
=
\frac{\theta(-z; q)}{(y, -y/z)_\infty}
\left(1+O(e^{-\kappa_z/t})\right).
\end{align*}
\end{corollary}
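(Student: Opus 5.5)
The plan is to start from Proposition \ref{promr1}, specialized to real parameters $0<q<y<1$ and $z>0$. The hypotheses there hold: $|y|>|q|$, $z\neq0$, $1/y>1$ is not a power $q^k$ with $k\ge0$, and $-z/y<0$. Write the proposition as $S+R=\Theta$, where
\[
S:=\sum_{n\ge 1}\frac{z^nq^{\binom n2}}{(y)_n},\qquad R:=(q/y)_\infty\sum_{\ell\ge0}\frac{(q/y)^\ell}{(q)_\ell(1+z^{-1}yq^\ell)},\qquad \Theta:=\frac{\theta(-z;q)}{(y,-y/z)_\infty}.
\]
Then $S=\Theta-R=\Theta(1-R/\Theta)$, and it suffices to show $R/\Theta=O(e^{-\kappa_z/t})$.

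First, I bound $R$ uniformly. The sandwich estimate displayed just before the corollary uses $q/y<1$ and Euler's identity $\sum_\ell (q/y)^\ell/(q)_\ell=1/(q/y)_\infty$. It gives $0<\frac{1}{1+y/z}\le R\le 1$ for all such $q,y,z$.

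Second, I need $\Theta$ to be exponentially large. For $z>0$ the quantity $\Theta$ is positive, since $S>0$ and $R>0$; equivalently, every factor of $\theta(-z;q)=(-z,-q/z,q)_\infty$ and of $(y,-y/z)_\infty$ is positive. Monotonicity gives $(u)_n\le 1$ for $0<u<1$, so each term $z^nq^{\binom n2}/(y)_n\ge z^nq^{\binom n2}/(q)_n$, because $(y)_n\le (q)_n$ when $y>q$. Hence $S\ge \sum_{n\ge1}z^nq^{\binom n2}/(q)_n$. This last series equals $f_1(z q^{-1/2},\tfrac12,0;q)-1$ in the notation of \eqref{eqmm1}. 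Theorem \ref{macmain}, with $a=z$, $b=1/2$, $c=-1/2$, gives growth $\exp\bigl((\operatorname{Li}_2(1-w)+\tfrac12\log^2 w)/t\bigr)$ up to a bounded factor, where $w\in(0,1)$ solves $zw+w-1=0$. The exponent is positive because both terms are positive for $0<w<1$. So $S\ge e^{2\kappa_z/t}$ for small $t$, with some $\kappa_z>0$ depending only on $z$, and therefore $\Theta=S+R\ge S\ge e^{2\kappa_z/t}$.

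Combining the two steps gives $R/\Theta\le e^{-2\kappa_z/t}$, so $S=\Theta(1+O(e^{-\kappa_z/t}))$, with an absolute implied constant. The main point to check is that the lower bound from Theorem \ref{macmain} is uniform in $y$; it is, because the comparison series does not involve $y$. For $t$ not small, the estimate is trivial after adjusting the constant, since $0<R\le1$ and $\Theta$ is bounded below on compact ranges. The only delicate issue is the shift $q^{\binom n2}=q^{n^2/2-n/2}$, which is absorbed by taking $c=-1/2$; Theorem \ref{macmain} allows any real $c$.
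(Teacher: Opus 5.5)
Your proposal is correct and follows the paper's proof. Proposition \ref{promr1} splits $\Theta=S+R$, the sandwich $\frac{1}{1+y/z}\le R\le 1$ bounds the remainder, and the comparison $(y)_n\le(q)_n$ together with Theorem \ref{macmain} gives $S\ge e^{\kappa_z/t}$ uniformly in $y$. As a small simplification, your comparison series is exactly $(-z;q)_\infty-1$ by Euler's identity \eqref{eqm1}, and the bound $\log(-z;q)_\infty=\sum_{j\ge0}\log(1+ze^{-jt})\ge t^{-1}\int_0^\infty\log(1+ze^{-u})\,du$ then gives the exponential lower bound without invoking McIntosh's theorem.
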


We note that Corollary \ref{cor32} in fact explains the following
asymptotic formula of McIntosh \cite[p. 420]{MR1618298}:
\begin{align}\label{asymm}
\left(\sum_{n\ge 1}
\frac{(aq^{2c})^nq^{n(n-1)}}{(q;q^2)_n}\right)
&\left(\sum_{n\ge 1}
\frac{(aq^{2c})^{-n}q^{n(n-1)}}{(q;q^2)_n}\right)\nonumber\\
&=\frac{\pi}{2a^ct}
\exp\left(
\frac{{\pi^2}/{3}+\log^2 a}{4t}
+\left(c^2+\frac{2}{3}\right)t
+O(t^8)
\right).
\end{align}
In particular, replacing $z$ by $z^{-1}$ and $y$ by $q/y$ in
Corollary \ref{cor32}, we obtain
\[
\sum_{n\ge 1}\frac{z^n q^{\binom{n}{2}}}{(y)_n}
\sum_{n\ge 1}\frac{z^{-n} q^{\binom{n}{2}}}{(q/y)_n}
=
\frac{(q)_\infty^2 \theta(-z; q)\theta(-1/z;q)}
{\theta(y;q)\theta(-y/z; q)}
\left(1+O(e^{-\kappa_z/t}+e^{-\kappa_{z^{-1}}t})\right).
\]
Finally, replacing $q$ by $q^2$, $z$ by $aq^{2c}$, and $y$ by $q$,
and using the asymptotics for the theta functions, we obtain an
explanation of \eqref{asymm}. Furthermore, we see that the term
$t^8$ in the big-$O$ of \eqref{asymm} can be improved to an
exponentially small function, namely $e^{-K_a/t}$, where $K_a>0$
depends only on $a$.
\subsection{Proof of Corollary \ref{cormm}}

In this subsection, we prove Corollary \ref{cormm}. For any formal
Laurent series $f(z)=\sum_k c_k z^k$, we define
$[z^n]f(z):=c_n$ for any $n\in\bz$. Then, using Euler's identity, we obtain
\begin{align*}
H_{2}(x;y;q)
&=\sum_{i,j\in\bz}
\frac{q^{\frac{1}{4}(i+j)^2}(-x)^i(-y)^j}{(q)_i(q)_j} \\
&= [\xi^0]\sum_{k\in\bz}q^{k^2/4}\xi^{-k}
\sum_{i\in\bz}\frac{(-x\xi)^i}{(q)_i}
\sum_{j\in\bz}\frac{(-y\xi)^j}{(q)_j} \\
&= [\xi^0]\frac{1}{(-x\xi,-y\xi)_\infty}
\sum_{k\in\bz}q^{k^2/4}\xi^{-k}.
\end{align*}
Thus, for $y=-x$, since
$(-x\xi,x\xi)_\infty=(x^2\xi^2;q^2)_\infty$, Euler's identity gives
\begin{align*}
H_{2}(x;-x;q)
&= [\xi^0]\frac{1}{(x^2\xi^2;q^2)_\infty}
\sum_{k\in\bz}q^{k^2/4}\xi^{-k} \\
&= [\xi^0]\frac{1}{(x^2\xi^2;q^2)_\infty}
\sum_{k\in\bz}q^{k^2}\xi^{-2k} \\
&= [\xi^0]\sum_{n\ge 0}
\frac{x^{2n}\xi^{2n}}{(q^2;q^2)_n}
\sum_{k\in\bz}q^{k^2}\xi^{-2k} \\
&=\sum_{n\ge 0}\frac{x^{2n}q^{n^2}}{(q^2;q^2)_n}
=(-x^2q;q^2)_\infty.
\end{align*}
Similarly, for $y=q^{1/2}x$, since
$(-x\xi,-xq^{1/2}\xi)_\infty=(-x\xi;q^{1/2})_\infty$, we have
\begin{align*}
H_{2}\left(x;xq^{1/2};q\right)
&= [\xi^0]\frac{1}{(-x\xi;q^{1/2})_\infty}
\sum_{k\in\bz}q^{k^2/4}\xi^{-k} \\
&=(xq^{1/4};q^{1/2})_\infty.
\end{align*}
We summarize the above identities in the following proposition.

\begin{proposition}\label{pro22}
We have
\[
H_{2}(x;-x;q)=(-x^2q;q^2)_\infty\;\;
\text{and}\;\;
H_{2}\left(x;xq;q^2\right)=(xq^{1/2})_\infty.
\]
\end{proposition}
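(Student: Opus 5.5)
The plan is to compute both values through the constant-term representation derived just above:
\[
H_2(x;y;q)=[\xi^0]\frac{1}{(-x\xi,-y\xi)_\infty}\sum_{k\in\bz}q^{k^2/4}\xi^{-k}.
\]
This representation holds because $Q_2(i,j)=i^2+j^2-\frac12(i-j)^2=\frac12(i+j)^2$. Hence $q^{\frac12Q_2}=q^{(i+j)^2/4}$, and it factors as the $\xi^0$ coefficient of a theta series in $\xi^{-1}$ times the two Euler generating functions $\sum_i (-x\xi)^i/(q)_i=1/(-x\xi)_\infty$.

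\textbf{The first identity.} Set $y=-x$, so the two Euler products combine into $(x^2\xi^2;q^2)_\infty$. The expansion of its reciprocal contains only even powers of $\xi$, so only even $k=2m$ in the theta sum contribute to the constant term; these give $q^{m^2}\xi^{-2m}$. Matching the exponent $2n$ from $1/(x^2\xi^2;q^2)_\infty=\sum_n x^{2n}\xi^{2n}/(q^2;q^2)_n$ against $-2m$ forces $m=n$. The result is
\[
\sum_{n\ge0}\frac{x^{2n}q^{n^2}}{(q^2;q^2)_n},
\]
which equals $(-x^2q;q^2)_\infty$ by Euler's identity \eqref{eqm1} in base $q^2$ with argument $x^2q$.

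\textbf{The second identity.} It is stated with base $q^2$: $H_2(x;xq;q^2)$. I would first prove the base-$q$ version $H_2(x;xq^{1/2};q)=(xq^{1/4};q^{1/2})_\infty$ and then replace $q$ by $q^2$. With $y=xq^{1/2}$ we have
\[
(-x\xi;q)_\infty(-xq^{1/2}\xi;q)_\infty=(-x\xi;q^{1/2})_\infty .
\]
Expanding $1/(-x\xi;q^{1/2})_\infty=\sum_{n\ge0}(-x\xi)^n/(q^{1/2};q^{1/2})_n$ and pairing $\xi^n$ with the $k=n$ theta term $q^{n^2/4}\xi^{-n}$ gives
\[
\sum_{n\ge0}\frac{(-x)^nq^{n^2/4}}{(q^{1/2};q^{1/2})_n}.
\]
Writing $n^2/4=\frac12\binom n2+\frac n4$, this is Euler's identity \eqref{eqm1} in base $p=q^{1/2}$ with $z=-xq^{1/4}$, giving $(xq^{1/4};q^{1/2})_\infty$. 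Now substituting $q\mapsto q^2$ turns $H_2(x;xq^{1/2};q)$ into $H_2(x;xq;q^2)$ and the right side into $(xq^{1/2};q)_\infty=(xq^{1/2})_\infty$, as claimed.

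\textbf{Main obstacle.} The only delicate point is justifying the constant-term manipulation, that is, interchanging the sums and taking coefficients of $\xi^0$. The cleanest route avoids formal Laurent series entirely. One reindexes the absolutely convergent double sum defining $H_2$ by $n=i+j$ and sums over $i$ first. The inner sum is $\sum_{i+j=n}(-x)^i(-y)^j/((q)_i(q)_j)$, which is the $n$-th coefficient of $1/(-x\xi,-y\xi)_\infty$. Here $Q_2$ is only semidefinite, since $\alpha=r=2$, but the sum still converges absolutely for $|x|,|y|<1$, and the final identities extend to all $x$ by analyticity of both sides. The remaining steps are routine exponent bookkeeping.
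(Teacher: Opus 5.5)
Your proposal is correct and follows essentially the same route as the paper: write $Q_2=\frac12(i+j)^2$ to get the constant-term representation, merge the two Euler products for $y=-x$ and $y=xq^{1/2}$, apply Euler's identity, and finish with $q\mapsto q^2$ (your regrouping by $n=i+j$ is just a cleaner way to justify the paper's $[\xi^0]$ step). One small correction to your convergence remark: here $r=1$, so $\alpha=2>r$, and since $\frac12(i+j)^2\ge\frac12(i^2+j^2)$ for $i,j\ge0$ (or directly from your bound on the regrouped coefficients), $H_2(x;y;q)$ converges absolutely for all $x,y$, so no restriction to $|x|,|y|<1$ and no analytic continuation is needed.
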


\begin{proof}[Proof of Corollary \ref{cormm}]
Using the identity \eqref{eqr22} in Corollary \ref{corm1} with
$y=-x$, together with the first identity in Proposition \ref{pro22},
we obtain
\begin{align*}
L_{2}(zx;-z^{-1}x;q,z^{2})
&=\frac{
\theta\left(-zq^{-1/4};q^{1/2}\right)H_{2}(x;-x;q)
+\theta\left(zq^{-1/4};q^{1/2}\right)H_{2}(-x;x;q)}
{2(zx,-qz^{-1}x)_\infty} \\
&=\frac{
\theta\left(-zq^{-1/4};q^{1/2}\right)
+\theta\left(zq^{-1/4};q^{1/2}\right)}
{2(zx,-qz^{-1}x)_\infty}
(-x^2q;q^2)_\infty \\
&=\frac{
(-x^2q;q^2)_\infty
\sum_{\substack{n\in\bz\\ 2\mid n}}
(-zq^{-1/4})^{n}q^{\frac{n(n-1)}{4}}}
{(zx,-qz^{-1}x)_\infty} \\
&= \frac{
(-x^2q;q^2)_\infty
\sum_{n\in\bz}z^{2n}q^{n(n-1)}}
{(zx,-qz^{-1}x)_\infty} \\
&=\frac{(-x^2q;q^2)_\infty\theta(-z^2;q^2)}
{(zx,-qz^{-1}x)_\infty}.
\end{align*}
Moreover, using the identity \eqref{eqr22} in Corollary \ref{corm1}
with $y=q^{1/2}x$, then replacing $q$ by $q^2$, and combining this
with the second identity in Proposition \ref{pro22}, we obtain
\begin{align*}
L_{2}(zx;z^{-1}xq;q^2,z^{2})
&=\frac{
\theta\left(-zq^{-1/2};q\right)H_{2}(x;xq;q^2)
+\theta\left(zq^{-1/2};q\right)H_{2}(-x;-xq;q^2)}
{2(zx,q^{3}z^{-1}x;q^2)_\infty} \\
&=\frac{
\theta\left(-zq^{-1/2};q\right)(xq^{1/2})_\infty
+\theta\left(zq^{-1/2};q\right)(-xq^{1/2})_\infty}
{2(zx,q^{3}z^{-1}x;q^2)_\infty}.
\end{align*}
Finally, using the definition of $L_2(x;y;q,z)$, we immediately obtain
Corollary \ref{cormm}.
\end{proof}

\section{Asymptotic Analysis}\label{sec4}
In this section, we establish the asymptotic formulas stated in
Section \ref{sec1}. The first result concerns the functions
$f_2(a,b,c;e^{-t})$ and $\hat{f}_2(a,b,c;e^{-t})$ introduced in
Subsection \ref{sec12}.

\begin{proposition}\label{prop10}
Let $c\in\br$. Suppose either that $a>1/2$ and $b>0$, or that
$1/2<a<1$ and $b=0$. Then
\[
\frac{\hat{f}_2(a,b,c;e^{-t})}{f_2(a,b,c;e^{-t})}
=
1+O(e^{-\delta/t})
\qquad \text{as } t\to 0^+,
\]
where $\delta>0$ is a constant which, in general, depends on $a$ and
$b$, but not on $c$.
\end{proposition}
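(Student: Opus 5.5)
The plan is to bound the correction term
\[
R(t):=\hat f_2(a,b,c;e^{-t})-f_2(a,b,c;e^{-t})=\sum_{n\ge1}\frac{q^{bn^2-cn}}{a^n(-1;q^{-1})_n}
\]
from above by a crude theta-type sum, and to bound $f_2(a,b,c;e^{-t})$ from below by a single well-chosen term. Both sides are then compared at the exponential scale $e^{\kappa/t}$.

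\emph{Upper bound for $R(t)$.} Since $0<q<1$, we have
\[
(-1;q^{-1})_n=\prod_{j=0}^{n-1}(1+q^{-j})\ge 2q^{-\binom n2}.
\]
Hence
\[
|R(t)|\le \tfrac12\sum_{n\ge1}a^{-n}e^{-t\left((b+\frac12)n^2-(c+\frac12)n\right)}.
\]
Setting $x=nt$, the summand equals $e^{\frac1t\left(-x\log a-(b+\frac12)x^2\right)+O(x)}$, where the $O(x)$ term carries the dependence on $c$. Comparing the sum with a Gaussian integral (or applying Lemma \ref{lem21}) gives
\[
|R(t)|\ll_{a,b,c} t^{-1/2}\exp\!\left(\frac{T}{t}\right),\qquad
T:=\sup_{x\ge0}\bigl(-x\log a-(b+\tfrac12)x^2\bigr).
\]
Explicitly, $T=\frac{\log^2 a}{4b+2}$ if $a<1$ and $T=0$ if $a\ge1$. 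In the case $b=0$, $1/2<a<1$, the same computation gives $T=\tfrac12\log^2 a$.

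\emph{Lower bound for $f_2$.} All terms of $f_2$ are positive, so $f_2\ge a^nq^{bn^2+cn}(-q)_n$ for every $n$. Taking $n=\lfloor x/t\rfloor$ and using the Riemann-sum estimate
\[
\log(-q)_n=\frac1t\int_0^x\log(1+e^{-u})\,du+O(1)
\]
(which holds uniformly for $x$ in compacts), we obtain $\log f_2\ge \frac{1}{t}\sup_{x\ge0}M(x)+O(1)$, where
\[
M(x):=x\log a-bx^2+\int_0^x\log(1+e^{-u})\,du.
\]
It is convenient to rewrite this as
\[
M(x)=x\log a-(b+\tfrac12)x^2+\int_0^x\log(1+e^{u})\,du .
\]
The proposition then follows with any $\delta<\sup_{x\ge 0} M(x)-T$. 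The constant $\delta$ depends only on $a$ and $b$, since $c$ enters only through the $O(1)$ and $e^{O(x)}$ factors.

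\emph{Main obstacle.} The crux is the strict inequality $\sup_{x\ge0}M(x)>T$ under the stated hypotheses. For $a\ge1$ it is immediate: $T=0$, while $M(0)=0$ and $M'(0)=\log(2a)>0$. For $1/2<a<1$, I would compare the two variational problems at the tail maximiser $x_*=-\log a/(2b+1)$, and more generally compare $M(x)$ with the tail functional $-x\log a-(b+\frac12)x^2$ along suitable points. Here the extra term $\int_0^x\log(1+e^u)\,du\ge x\log2$ has to compensate for the sign change of $x\log a$; this is exactly where the hypothesis $a>1/2$ (equivalently $M'(0)=\log 2a>0$) should be used. If the direct pointwise comparison is too lossy, the first fallback is to locate the true maximiser of $M$ from $\log a-2bx+\log(1+e^{-x})=0$ and compare the two maxima as explicit functions of $\log a$. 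The second fallback is to sharpen the tail bound by keeping the full product $\prod_{j=1}^{n-1}(1+q^{-j})$, which gives a tail exponent with a $-\int_0^x\log(1+e^{u})\,du$ correction and makes the comparison with $M$ more symmetric.
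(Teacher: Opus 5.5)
Your lower bound for $f_2$ is fine, and so is the case $a\ge 1$. The argument breaks at the tail estimate: the inequality you single out as the crux, $\sup_{x\ge0}M(x)>T$, is false on part of the admissible range.

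The bound $(-1;q^{-1})_n\ge 2q^{-\binom{n}{2}}$ throws away the factor $2^{n-1}$ that comes from $1+q^{-j}\ge 2$ for $1\le j\le n-1$. For $1/2<a<1$ your majorant of $R(t)$ then really is of size about $t^{-1/2}e^{T/t}$ with $T=\log^2 a/(4b+2)>0$, because the terms $a^{-n}$ grow until $nt\approx -\log a/(2b+1)$.

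On the other side, $M$ is strictly concave with $M(0)=0$ and $M'(0)=\log(2a)$. Hence $\sup M\to0$ as $a\to1/2^+$, while $T\to(\log 2)^2/(4b+2)>0$. Concretely, take $b=0$ and $a=3/5$. The maximiser is $x_0=\log\frac32$, and
\[
\sup M=\frac{\pi^2}{12}+\operatorname{Li}_2\!\left(-\tfrac23\right)-\log\tfrac32\,\log\tfrac53\approx 0.036,
\]
whereas $T=\frac12\log^2\frac53\approx 0.130$.

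Since $\log f_2=(\sup M+o(1))/t$, your lower bound is already sharp on the exponential scale. So neither a better comparison point nor locating the true maximiser (your first fallback) can close the gap; the loss lies entirely in the upper bound for $R(t)$.

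The missing step is to keep every factor. We have $(-1;q^{-1})_n=\prod_{j=0}^{n-1}(1+q^{-j})\ge 2^n$ and $q^{bn^2}\le 1$, so
\[
0<R(t)\le\sum_{n\ge1}(2aq^{c})^{-n}=\frac{1}{2aq^{c}-1}\le\frac{1}{a-1/2}
\]
once $t$ is small enough that $2aq^{c}\ge a+1/2$. With $R(t)=O(1)$, you only need $f_2\ge e^{\kappa/t}$ for some $\kappa>0$ depending on $a,b$. Your Riemann-sum bound gives exactly this, because $\sup M>0$ (as $M'(0)=\log 2a>0$), and any $\delta<\sup M$ then works.

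This is your second fallback in its crudest form, and it is the paper's proof. The only difference is that the paper obtains the lower bound by hand: it uses $(-q)_n\ge(1+q^n)^n$ with $n=\lfloor u_0/(2t)\rfloor$ to get $f_2\ge(a/2+3/4)^n$.
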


\begin{proof}
Since $a>1/2$, for $t>0$ sufficiently small, with $q=e^{-t}$, we have
$2aq^c\ge a+1/2$. For $n\ge 1$, this gives
\begin{align*}
a^nq^{cn+bn^2}(-q;q)_n
&\ge \left(aq^c q^{bn}(1+q^n)\right)^n \\
&\ge \left(\left({a}/{2}+{1}/{4}\right)q^{bn}(1+q^n)\right)^n.
\end{align*}
Let $u_0:=u_0(a,b)>0$ be the solution of
\[
\left({a}/{2}+{1}/{4}\right)e^{-bu}(1+e^{-u})
=
{a}/{2}+{3}/{4}.
\]
The function
\[
\left({a}/{2}+{1}/{4}\right)e^{-bu}(1+e^{-u})
\]
is decreasing in $u>0$. Hence, if $q^n=e^{-tn}\ge e^{-u_0}$, that is,
if $0\le t\le u_0/n$, then
\[
a^nq^{bn^2+cn}(-q;q)_n
\ge
\left(
\left({a}/{2}+{1}/{4}\right)e^{-bu_0}(1+e^{-u_0})
\right)^n
=
\left({a}/{2}+{3}/{4}\right)^n.
\]
Therefore,
\begin{align*}
f_2(a,b,c;q)
&=\sum_{\ell\ge 0}a^\ell q^{b\ell^2+c\ell}(-q;q)_\ell \\
&\ge a^nq^{bn^2+cn}(-q;q)_n
\ge
\left({a}/{2}+{3}/{4}\right)^n.
\end{align*}
On the other hand,
\begin{align*}
\hat{f}_2(a,b,c;q)-f_2(a,b,c;q)
&=
\sum_{n\ge 1}\frac{q^{bn^2-cn}}{a^n(-1;q^{-1})_{n}} \\
&\le
\sum_{n\ge 1}(2a)^{-n}q^{-cn}
=
\frac{1}{2aq^c-1}
\le
\frac{1}{a-1/2},
\end{align*}
where the last inequality again uses $2aq^c\ge a+1/2$.

Now take
\[
n=\left\lfloor \frac{u_0}{2t}\right\rfloor
=
\frac{u_0}{2t}+O(1).
\]
Then
\begin{align*}
0\le
\frac{\hat{f}_2(a,b,c;q)-f_2(a,b,c;q)}
{f_2(a,b,c;q)}
&\le
\frac{1}{a-1/2}
\left(\frac{a}{2}+\frac{3}{4}\right)^{-n} \\
&=
O\left(
\exp\left(
-\frac{u_0}{2t}
\log\left(\frac{a}{2}+\frac{3}{4}\right)
\right)
\right) \\
&=
O(e^{-\delta/t}),
\end{align*}
where $\delta>0$ depends, in general, on $a$ and $b$, but not on $c$.
This proves the proposition.
\end{proof}
The second result is the proof of Corollary \ref{cor1}, which is based
on McIntosh's Theorem \ref{macmain}.

\begin{proof}[Proof of Corollary \ref{cor1}]
Let $\alpha>1$ and $z>0$. Using McIntosh's Theorem \ref{macmain}, and
setting $q=e^{-t}$ with $t\to 0^+$, we obtain the following asymptotic
formulas:
\begin{align*}
L_\alpha(q;q,z^\alpha)
&=
\sum_{n\ge 0}
\frac{z^{\alpha n}q^{\frac{\alpha}{2}n^2-\frac{\alpha}{2}n}}{(q)_n}  \\
&\sim
\frac{
w_1^{-\alpha/2}}
{
\sqrt{w_1+\alpha(1-w_1)}
}\exp\left(
\frac{
\li_2(1-w_1)+\frac{\alpha}{2}\log^2 w_1
}{t}
\right),
\\
L_{1-1/\alpha}
\left(q;q,z^{-1}q^{\frac{3}{2}-\frac{1}{2\alpha}}\right)
&=
\sum_{n\ge 0}
\frac{z^{-n}q^{\frac{1-1/\alpha}{2}n^2+n}}{(q)_n}  \\
&\sim
\frac{
w_2
}
{
\sqrt{w_2+(1-1/\alpha)(1-w_2)}
}\exp\left(
\frac{
\li_2(1-w_2)
+\frac{1-1/\alpha}{2}\log^2 w_2
}{t}
\right),
\end{align*}
where the implied constants in the asymptotic relations depend only on
$\alpha$ and $z$. Here, $w_1$ and $w_2$ denote the unique positive roots
of
\[
z^\alpha w^\alpha+w=1
\quad\text{and}\quad
w+z^{-1}w^{1-1/\alpha}=1,
\]
respectively. Therefore, we have the asymptotic tight bound:
\begin{align*}
L_{1-1/\alpha}
\left(q;q,z^{-1}q^{\frac{3}{2}-\frac{1}{2\alpha}}\right)
\asymp
e^{c_{\alpha}(z)/t}
L_\alpha(q;q,z^\alpha),
\end{align*}
where
\[
c_{\alpha}(z)
=
\li_2(1-w_2)
+\frac{1}{2}\left(1-\frac{1}{\alpha}\right)\log^2 w_2
-\li_2(1-w_1)
-\frac{\alpha}{2}\log^2 w_1.
\]

Now, letting $x=z^{-1}q$ in the asymptotic formula of Theorem
\ref{main1}, we obtain
\begin{align*}
\frac{
(2\pi z^\alpha)^{-1/2}L_\alpha(q;q,z^\alpha)
}{
\exp\left(\frac{\alpha t}{8}
+\frac{\alpha}{2t}(\log z)^2\right)
}
&=
\frac{
L_{1-1/\alpha}
\left(q;-z^{-1}q^{\frac{3}{2}-\frac{1}{2\alpha}},q\right)
}{
(\alpha t)^{1/2}(q)_\infty
} \\
&\quad
+O\left(
\frac{1
}{
(\alpha t)^{1/2}(q)_\infty
}
e^{-\frac{2\pi^2}{\alpha t}}
L_{1-1/\alpha}
\left(q;z^{-1}q^{\frac{3}{2}-\frac{1}{2\alpha}},q\right)
\right) \\
&=
\frac{
L_{1-1/\alpha}
\left(q;-z^{-1}q^{\frac{3}{2}-\frac{1}{2\alpha}},q\right)
}{
(\alpha t)^{1/2}(q)_\infty
} 
+O\left(
e^{\frac{\pi^2}{6t}
-\frac{2\pi^2}{\alpha t}
+\frac{c_\alpha(z)}{t}}
L_\alpha(q;q,z^\alpha)
\right),
\end{align*}
where the implied constants depend only on $\alpha$ and $z$. Here, we
used the well-known asymptotic formula from \eqref{eqdedk}:
\[
(q)_\infty
=
\left({2\pi}/{t}\right)^{1/2}
e^{\frac{t}{24}-\frac{\pi^2}{6t}}
\left(1+O(e^{-4\pi^2/t})\right).
\]
Equivalently,
\begin{align*}
L_\alpha(q;q,z^\alpha)
&\left(
1+
O\left(
e^{
\frac{\pi^2}{6t}
-\frac{2\pi^2}{\alpha t}
+\frac{c_\alpha(z)}{t}
+\frac{\alpha}{2t}\log^2 z
}
\right)
\right) \\
&=
\frac{
(2\pi z^\alpha)^{1/2}
e^{\frac{\alpha t}{8}
+\frac{\alpha}{2t}\log^2 z}
}{
(\alpha t)^{1/2}(q)_\infty
}
L_{1-1/\alpha}
\left(q;q,-z^{-1}q^{\frac{3}{2}-\frac{1}{2\alpha}}\right).
\end{align*}
To complete the proof, define
\begin{align*}
\delta_\alpha(z)
&=
-\frac{1}{2\pi^2}
\left(
\frac{\pi^2}{6}
-\frac{2\pi^2}{\alpha}
+c_\alpha(z)
+\frac{\alpha}{2}\log^2 z
\right) \\
&=
\frac{1}{\alpha}
-\frac{1}{2\pi^2}
\left(
\frac{\pi^2}{6}
+c_\alpha(z)
+\frac{\alpha}{2}\log^2 z
\right).
\end{align*}
Then
\begin{align*}
L_\alpha(q;q,z^\alpha)
&=
\frac{
(2\pi z^\alpha)^{1/2}
e^{\frac{\alpha t}{8}
+\frac{\alpha}{2t}\log^2 z}
}{
(\alpha t)^{1/2}(q)_\infty
}
L_{1-1/\alpha}
\left(q;q,-z^{-1}q^{\frac{3}{2}-\frac{1}{2\alpha}}\right) \\
&\quad \times
\left(
1+O\left(e^{-2\pi^2\delta_\alpha(z)/t}\right)
\right).
\end{align*}

It remains to simplify the expression for $\delta_\alpha(z)$. Write
\[
w_\alpha=(zw_1)^\alpha
\quad\text{and}\quad
w_\beta=w_2.
\]
Then, with $\beta=(1-1/\alpha)^{-1}$, the quantities $w_\alpha$ and
$w_\beta$ solve the equations
\[
w+z^{-1}w^{1/\alpha}=1
\quad\text{and}\quad
w+z^{-1}w^{1/\beta}=1,
\]
respectively. Hence, using the identity
\[
\li_2(u)+\li_2(1-u)+(\log u)\log(1-u)=\frac{\pi^2}{6}
\]
from \cite[Equation (1.11)]{MR618278}, we obtain
\begin{align*}
c_{\alpha}(z)
&=
\li_2(1-w_\beta)
+\frac{1}{2\beta}\log^2 w_\beta
-\li_2(w_\alpha)
-\frac{\alpha}{2}
\log^2\left(z^{-1}w_\alpha^{1/\alpha}\right) \\
&=
\frac{\pi^2}{6}
-(\log w_\beta)\log(1-w_\beta)
-\li_2(w_\beta)
+\frac{1}{2\beta}\log^2 w_\beta
-\li_2(w_\alpha)
-\frac{\alpha}{2}
\log^2\left(z^{-1}w_\alpha^{1/\alpha}\right) \\
&=
\frac{\pi^2}{6}
-\li_2(w_\alpha)
-\li_2(w_\beta)
-(\log w_\beta)\log\left(z^{-1}w_\beta^{1/\beta}\right)
+\frac{1}{2\beta}\log^2 w_\beta  \\
&\quad
-\frac{\alpha}{2}
\log^2\left(z^{-1}w_\alpha^{1/\alpha}\right) \\
&=
\frac{\pi^2}{6}
-\li_2(w_\alpha)
-\li_2(w_\beta)
-\frac{1}{2\beta}\log^2 w_\beta
-\frac{1}{2\alpha}\log^2 w_\alpha
+(\log z)\log(w_\alpha w_\beta)
-\frac{\alpha}{2}\log^2 z.
\end{align*}
Thus,
\[
\delta_\alpha(z)
=
\frac{1}{\alpha}
-\frac{1}{6}
+\frac{1}{2\pi^2}
\left(
\li_2(w_\alpha)
+\li_2(w_\beta)
+\frac{1}{2\beta}\log^2 w_\beta
+\frac{1}{2\alpha}\log^2 w_\alpha
-(\log z)\log(w_\alpha w_\beta)
\right),
\]
which completes the proof of the corollary.
\end{proof}

%\bibliographystyle{plain}
%\bibliography{test}

\end{document}